\tikzstyle{vertex}=[auto=left,circle,draw=black,fill=white, inner sep=1.5]
\newtheorem{theorem}{Theorem}[section]
\newtheorem{corollary}[theorem]{Corollary}
\newtheorem{lema}[theorem]{Lemma}
\newtheorem{rem}[theorem]{Remark}
\newcommand{\Aut}{\operatorname{Aut}}
\newcommand{\Sp}{\operatorname{Sp}}
\title{Switching equivalence of Hermitian adjacency matrices of mixed graphs}
\author{ Monu Kadyan and Bikash Bhattacharjya\\
Department of Mathematics\\
Indian Institute of Technology Guwahati, India\\
Emails: monu.kadyan@iitg.ac.in, b.bikash@iitg.ac.in}
\date{}
\begin{document}
\maketitle

\vspace{-0.3in}

\begin{center}{\textbf{Abstract}}\end{center}
Let $0 \in \Gamma$ and $\Gamma \setminus \{0\}$ be a subgroup of the  complex numbers of unit modulus.  Define $\mathcal{H}_{n}(\Gamma)$ to be the set of all $n\times n$ Hermitian matrices with entries in $\Gamma$, whose diagonal entries are zero. The matrices $A,B\in \mathcal{H}_{n}(\Gamma)$ are said to be switching equivalent if there is a diagonal matrix $D$, in which the diagonal entries belong to $\Gamma \setminus \{0\}$, such that $D^{-1} A D=B$. We find a characterization, in terms of fundamental cycles of graphs, of switching equivalence of matrices in $\mathcal{H}_{n}(\Gamma)$. We give sufficient conditions to characterize the cospectral matrices in $\mathcal{H}_{n}(\Gamma)$. We find bounds on the number of switching equivalence classes of all mixed graphs with the same underlying graph. We also provide the size of all switching equivalence classes of mixed cycles, and give a formula that calculates the size of a switching equivalence class of a mixed plane graph. We also discuss an action of the automorphism group of a graph on switching equivalence classes of matrices in $\mathcal{H}_{n}(\Gamma)$. 

\vspace*{0.3cm}
\noindent 
\textbf{Keywords:} Mixed graph, Hermitian-adjacency matrix, Switching equivalence\\
\textbf{AMS Classification:} 05C50
\section{Introduction}
 
A  \textit{mixed graph} $G$ is a pair $(V (G), E(G))$, where $V(G)$ is a nonempty set and $E(G)$ is a set of ordered pairs of distinct elements of $V(G)$. The elements of $V(G)$ and $E(G)$ are called the vertices and edges of $G$, respectively. Similarly, the sets $V(G)$ and $E(G)$ are called the vertex set and edge set of $G$, respectively. If $(u,v)\in E(G)$, then the vertices $u$ and $v$ are called the end vertices of the edge $(u,v)$. In a mixed graph $G$, we call an edge with end vertices $u$ and $v$ \textit{undirected} (respectively \textit{directed}) if both $(u,v)$ and $(v,u)$ belong to $E(G)$ (respectively exactly one of $(u,v)$ and $(v,u)$ belongs to $E(G)$). If $(u,v)$ is an undirected edge, then the edges $(u,v)$ and $(v,u)$ are considered identical. Thus, a mixed graph can have both directed and undirected edges. For more information, we refer the reader to \cite{2015mixed}. If all the edges of a mixed graph $G$ are directed (respectively undirected), then $G$ is called a \emph{directed} graph (respectively an \emph{undirected} graph). For a mixed graph $G$, the underlying graph $G_{U}$ of $G$ is the simple undirected graph in which all edges are considered undirected. By the terms order, size, number of components, degree of a vertex, distance between two vertices, walk, path, cycle, tree, bipartiteness etc, we mean the same as in their underlying graphs.

A {\em partial orientation} $\phi$ of an undirected graph $G$, denoted $G^{\phi}$, is obtained by choosing one of the two directed edges in each edge of a subset $S$ of $E(G)$. It is called an {\em orientation} of $G$ if $S=E(G)$, and the resulting graph is called an {\em oriented} or \emph{directed} graph. A partial orientation of $G$ is called trivial if $S=\emptyset $. Thus a mixed graph is obtained from an undirected graph along with a partial orientation $\phi$. So, $M$ is a mixed graph if and only if $M=G^{\phi}$ for some partial orientation $\phi$ of an undirected graph $G$. Define $\mathcal{M}(G)$ to be the set of all mixed graphs having $G$ as the underlying graph, that is,
$$\mathcal{M}(G)=\{G^{\phi} \colon \phi \textnormal{ is a partial orientation of $G$}  \},$$ 
where $G$ is an undirected graph. Note that $\mathcal{M}(G)$ contains $3^m$ distinct mixed graphs with underlying graph $G$ of size $m$.

Liu et al.~\cite{2015mixed}, and also Guo and Mohar~\cite{guo2017hermitian} independently, introduced the Hermitian adjacency matrix of a mixed graph. 
 For a mixed graph $G^{\phi}$ on $n$ vertices, its \textit{Hermitian adjacency matrix} $H(G^{\phi}):=[h_{uv}]_{n\times n}$, is defined by 
 $$h_{uv} = \left\{ \begin{array}{rll}
1 &\mbox{ if } (u,v)\in E \textnormal{ and } (v,u)\in E \\ 
\mathbf{i} & \mbox{ if } (u,v)\in E \textnormal{ and } (v,u)\not\in E \\
-\mathbf{i} & \mbox{ if }  (u,v)\not\in E \textnormal{ and } (v,u)\in E\\
0 &\textnormal{ otherwise.}
\end{array}\right.$$
Here $\mathbf{i}:=\sqrt{-1}$ is the imaginary number unit. The Hermitian-adjacency matrix of a mixed graph can also be obtained as a special case of the adjacency matrix of a 3-colored digraph introduced by Bapat et al. \cite{debajit}. 

Let $\Sp(A)$ denote the multiset of eigenvalues of the matrix $A$. We say that two matrices $A$ and $B$ are cospectral if $\Sp(A)=\Sp(B)$. The spectrum of $G^{\phi}$, denoted $\Sp_H(G^{\phi})$, is the spectrum of $H(G^{\phi})$. It is easy to see that $H(G^{\phi})$ is a Hermitian matrix, and so $\Sp_H(G^{\phi})\subset \mathbb{R}$. We say that two mixed graphs $G^{\phi}$ and $M^{\gamma}$ are cospectral if $H(G^{\phi})$ and $H(M^{\gamma})$ are cospectral.

Let $0 \in \Gamma$ and $\Gamma \setminus \{0\} $ be a subgroup of the complex numbers of unit modulus. Let $\mathcal{H}_{n}(\Gamma)$ be the set of all $n\times n$ Hermitian matrices with entries in $\Gamma$, whose diagonal entries are zero. Let $A:=[a_{ij}]\in \mathcal{H}_{n}(\Gamma)$. Define a graph $G_A$ corresponding to $A$ such that $V(G_A)=\{ 1,\ldots,n\}$ and $E(G_A) = \{ (u,v)\colon  a_{uv}\neq0 \}$. We call $G_A$ the graph of $A$. Note that $G_A$ is an undirected graph. The \emph{gain graph} of $A:=[a_{uv}]\in \mathcal{H}_{n}(\Gamma)$, denoted $G_A^{\zeta}$, is the graph $G_A$ in which an edge $(u,v)$ has the gain $a_{uv}$. The Hermitian adjacency matrix of a mixed graph is a special case of the adjacency matrix of a complex unit gain graph in which the gain set is $ \{1, \mathbf{i}, -\mathbf{i}\}$. For a walk $W:= u_1 \ldots u_k$ in $G_A^{\zeta}$, define the \textit{gain} of $W$ by $\zeta_A(W)\coloneqq a_{u_1u_2} a_{u_2u_3}\ldots a_{u_{k-1}u_k}$. Similarly, for a cycle $C:= u_1 \ldots u_ku_1$ in $G_A^{\zeta}$, the \textit{gain} of $C$ is $\zeta_A(C)\coloneqq a_{u_1u_2} a_{u_2u_3}\ldots a_{u_{k-1}u_k}a_{u_ku_1}$. Let us denote the real part of $\zeta_A(C)$ by $\Re_A(C)$, and call it the \textit{real gain} of $C$ with respect to $A$. A matrix $A \in \mathcal{H}_{n}(\Gamma)$ is said to be \textit{balanced} if the gain of each cycle in $G_A^{\zeta}$ is $1$. 

A switching function of $\Gamma\setminus \{ 0\}$ on $G_A$ is a function $\theta:V(G_A)\to \Gamma\setminus \{ 0\}$. Matrices $A,B\in \mathcal{H}_{n}(\Gamma)$ are said to be switching equivalent, denoted $A\thicksim B$, if there is a switching function $\theta$ of $\Gamma \setminus \{ 0\}$ on $G_A$ such that  $D(\theta)^{-1} A D(\theta)=B$, where $D(\theta)= \text{diag}[ \theta(1),\ldots,\theta(n)]$. Switching equivalence on $\mathcal{H}_{n}(\Gamma)$ was also discussed in \cite{reff, zaslavsky1989}. Let $\overline{z}$ denote the conjugate of a complex number $z$. The condition $D(\theta)^{-1} A D(\theta)=B$, where $D(\theta)=\text{diag}[ \theta(1),\ldots,\theta(n)]$, implies that an edge $(u,v)$ has the gain $a_{uv}$ in $G_A^{\zeta}$ if and only if it has the gain $\overline{\theta(u)}\theta(v)a_{uv}$ in $G_B^{\zeta}$. It is clear  that if $A\thicksim B$, then $G_A=G_B$.

Let $D(\theta)^{-1} A D(\theta)=B$, where $D(\theta)=\text{diag}[ \theta(1),\ldots,\theta(n)]$ and $\theta(i)\in \Gamma\setminus \{ 0\}$ for each $i\in \{1,\ldots,n\}$. Let $C: v_1v_2\ldots v_kv_1$ be a cycle in $G_A$. We have
\begin{equation*}
\begin{split}
\zeta_{A}(C)&=a_{v_1v_2}a_{v_2v_3}\cdots a_{v_kv_1}\\
&= \overline{\theta(v_1)}a_{v_1v_2}\theta(v_2)\overline{\theta(v_2)}a_{v_2v_3}\theta(v_3)\cdots \overline{\theta(v_k)}a_{v_kv_1}\theta(v_1)\\
&=b_{v_1v_2}b_{v_2v_3}\cdots b_{v_kv_1}\\
&=\zeta_{B}(C).
\end{split}
\end{equation*}
Thus, if $A\thicksim B$ then  $\zeta_A(C)=\zeta_B(C)$ for each cycle $C$ in $G_A$. If $A\thicksim B$, then we also say that the gain graphs $G_A^{\zeta}$ and $G_B^{\zeta}$ are equivalent, and it is denoted by $G_A^{\zeta}\thicksim G_B^{\zeta}$.

The existence of cospectral graphs is one of the widespread problems in graph theory. The seminal paper by Collatz and Sinogowitz \cite{von1957spektren} provided the first example of cospectral trees. Cospectral graphs are studied for signed graphs \cite{akbari2018signed}, oriented graphs \cite{denglan2013skew}, mixed graphs \cite{mohar2016hermitian}, complex unit gain graphs \cite{samanta2019spectrum} etc. In all these cases, to characterize the cospectral graphs, researchers introduce a switching equivalence relation, which is a similarity transformation. It is clear that if two matrices in $\mathcal{H}_{n}(\Gamma)$ are switching equivalent then they are similar, and hence they are cospectral. In 1982, Thomas Zaslavsky \cite{zaslavsky1982signed} characterized switching equivalence on signed graphs in terms of balance of cycles. In 2020, Yi Wang and Bo-Jun Yuan  \cite{wang2020graphs} characterized switching equivalence on mixed graphs using a strong cycle basis. Note that the cycle basis with respect to a maximal forest of a graph is indeed a strong cycle basis. Thus, strong cycle bases are not so important in connection with gains on mixed graphs. Guo and Mohar \cite{guo2017hermitian} presented an operation on mixed graphs, called four-way switching, which can be described as switching equivalence relation that preserves the spectrum of mixed graphs. The authors in \cite{debajit} and \cite{reff} simultaneously and independently defined complex unit gain graphs, their adjacency matrices, and switching. However, the terminologies of \cite{debajit} and \cite{reff} are different.

The paper is organized as follows. In the second section, we introduce a characterization of switching equivalence on $\mathcal{H}_{n}(\Gamma)$. A sufficient condition is also presented that characterizes a bipartite graph $G_A$ in terms of the eigenvalues of a matrix $A$. We also provide sufficient conditions to characterize cospectral matrices in $\mathcal{H}_{n}(\Gamma)$. In the third section, we find bounds on the number of the switching equivalence classes of all mixed graphs with the same underlying graph. In the fourth section, we find the size of all switching equivalence classes of mixed cycles, and give a formula that calculates the size of a switching equivalence class of a mixed plane graph. In the last section,  we discuss an action of the automorphism group of a graph on switching equivalence classes of matrices in $\mathcal{H}_{n}(\Gamma)$.

\section{Switching equivalence in $\mathcal{H}_{n}(\Gamma)$ }

There are various types of adjacency matrices in algebraic graph theory, which are defined on graphs. For example, the adjacency matrix of a signed graph, the Hermitian adjacency matrix of a complex unit gain graph, and the Hermitian adjacency matrix of a mixed graph. In all these cases, the notion of switching equivalence is defined. This section defines switching equivalence on Hermitian matrices with zero diagonal entries and characterizes them using fundamental cycles. This, in turn, characterizes switching equivalence in a signed graph, mixed graph, and complex unit gain graph.

Let $G$ be a graph and $F$ be a maximal forest of $G$. Define $$\mathcal{B}_F(G)\coloneqq \{ C_F(e):e\in E(G)\setminus E(F)\},$$ where $C_F(e)$ denotes the unique cycle in $F\cup \{e\}$. The cycle $C_F(e)$ is called the \textit{fundamental cycle} of $G$ with respect to $F$ and $e$, and $\mathcal{B}_F(G)$ is called the \textit{fundamental cycle basis} of $G$ with respect to $F$.  We say that the matrices $A,B\in \mathcal{H}_{n}(\Gamma)$ have the same graph if $G_A=G_B$.  

\begin{lema}\label{1}
Let $A\in \mathcal{H}_{n}(\Gamma)$ and $F$ be a maximal forest of $G_A$. Then there is a matrix $ [a^\prime_{ij}]\in \mathcal{H}_{n}(\Gamma)$ such that $A\thicksim [a^\prime_{ij}]$, where $a^\prime_{ij}=1$ for each edge $(i,j)$ in $F$. Further, the switching is unique up to a scalar multiple in each component of $F$.
\end{lema}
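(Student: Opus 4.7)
The plan is to construct the switching function $\theta\colon V(G_A)\to \Gamma\setminus\{0\}$ component by component of $F$. Since $F$ is a forest, work one component (a tree) $T$ at a time. Pick a root $r$ of $T$ and set $\theta(r)$ to any fixed element of $\Gamma\setminus\{0\}$, say $\theta(r)=1$. Extend $\theta$ by a BFS/DFS: whenever an edge $(u,v)\in E(T)$ is reached with $\theta(u)$ already defined and $\theta(v)$ not yet defined, declare
\[
\theta(v) \coloneqq \theta(u)\,\overline{a_{uv}}.
\]
Because $\Gamma\setminus\{0\}$ is a group under multiplication and $|a_{uv}|=1$, this keeps $\theta(v)\in\Gamma\setminus\{0\}$. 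Since $T$ is acyclic, no consistency condition arises at any step, so $\theta$ is well-defined on all of $V(G_A)$ (isolated vertices form trivial components where $\theta$ is set to $1$).

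Second, verify that the matrix $B\coloneqq D(\theta)^{-1}AD(\theta)=[a'_{ij}]$ satisfies $a'_{ij}=1$ for every $(i,j)\in E(F)$. By the computation in the excerpt, $a'_{ij}=\overline{\theta(i)}\theta(j)a_{ij}$; for an edge traversed as above we have $\theta(j)=\theta(i)\overline{a_{ij}}$, and using $\overline{\theta(i)}=\theta(i)^{-1}$ (since $|\theta(i)|=1$) we get $a'_{ij}=\theta(i)^{-1}\theta(i)\,\overline{a_{ij}}\,a_{ij}=1$. Hermiticity then forces $a'_{ji}=1$ as well, and the whole matrix $B$ lies in $\mathcal{H}_n(\Gamma)$ because conjugation by $D(\theta)$ preserves the $\Gamma$-valued, zero-diagonal structure.

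For the uniqueness statement, suppose $\theta'\colon V(G_A)\to \Gamma\setminus\{0\}$ is another switching function such that $D(\theta')^{-1}AD(\theta')$ has a $1$ in every $F$-edge position. Fix a component $T$ of $F$, and define $\eta(v)\coloneqq \theta'(v)\,\theta(v)^{-1}$ on $V(T)$. For each tree edge $(i,j)\in E(T)$, both $\theta$ and $\theta'$ must satisfy $\overline{\theta(i)}\theta(j)a_{ij}=1=\overline{\theta'(i)}\theta'(j)a_{ij}$; dividing these equations yields $\overline{\eta(i)}\eta(j)=1$, i.e.\ $\eta(j)=\eta(i)$ (again using $|\eta(i)|=1$). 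Since $T$ is connected, $\eta$ is constant on $V(T)$, so $\theta'=c_T\cdot\theta$ on $V(T)$ for some $c_T\in \Gamma\setminus\{0\}$. Thus the switching function is unique up to multiplication by an arbitrary element of $\Gamma\setminus\{0\}$ on each component of $F$, as claimed.

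I do not foresee any real obstacle: the argument is essentially a spanning-tree bookkeeping exercise. The one point that requires attention throughout is the group-theoretic use of $\Gamma\setminus\{0\}\subset \{z\in\mathbb{C}\colon |z|=1\}$, namely that $\overline{w}=w^{-1}$ on $\Gamma\setminus\{0\}$ and that products and inverses stay in $\Gamma\setminus\{0\}$; both are needed at every step to guarantee $\theta(v)\in \Gamma\setminus\{0\}$ and that the resulting matrix $B$ lies in $\mathcal{H}_n(\Gamma)$.
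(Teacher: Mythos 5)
Your proposal is correct and follows essentially the same route as the paper: root each component of $F$, define $\theta$ by propagating gains along tree edges (your recursion $\theta(v)=\theta(u)\overline{a_{uv}}$ is just the recursive form of the paper's explicit formula $\theta(w)=\zeta_A(P_{wv})$), check that tree edges get gain $1$, and prove uniqueness by showing any two valid switchings differ by a constant factor on each component. The only cosmetic difference is that the paper derives uniqueness from the full path-product identity while you argue edge-by-edge via the ratio $\eta=\theta'\theta^{-1}$; both arguments are the same in substance.
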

\begin{proof}Let $A\in \mathcal{H}_{n}(\Gamma)$ and $F$ be a maximal forest of $G_A$. Let a root be chosen in each of the components of $F$. For each $w\in V(G)$, define
\[\theta(w)= \left\{ \begin{array}{cl}
		1 &\mbox{ if } w=v \\
		\zeta_A(P_{wv}) &\textnormal{ otherwise,}
	\end{array}\right.\]
where $v$ is the root of the component of $F$ containing $w$, and $P_{wv}$ is the unique $wv$-path in $F$. Let
\[D(\theta)\coloneqq \text{diag}[ \theta(1),\ldots,\theta(n)]\text{ and } [a_{ij}^\prime] \coloneqq D(\theta)^{-1}AD(\theta).\]
It is clear that $[a_{ij}^\prime]\in \mathcal{H}_{n}(\Gamma)$ and $A\thicksim [a_{ij}^\prime]$. If $(i,j)\in E(F)$ and $v$ is the root of the component of $F$ containing $i$, then we have
\begin{equation*}
	\begin{split}
			a_{ij}^\prime &= \overline{\theta(i)}a_{ij}\theta(j) \\
				            &= \left\{ \begin{array}{ll}
					               \overline{\zeta_A(P_{iv})}a_{ij} a_{ji}\zeta_A(P_{iv}) & \textnormal{ if } (j,i) \textnormal{ appears on } P_{jv} \\ 
					               \overline{a_{ij}\zeta_A(P_{jv})} a_{ij}\zeta_A(P_{jv}) & \textnormal{ if } (j,i) \textnormal{ does not appear on } P_{jv}
				               \end{array}\right.\\
				            &= 1.
	\end{split} 
\end{equation*}
This proves the first part of the lemma. 	Now suppose $ [b^\prime_{ij}]\in \mathcal{H}_{n}(\Gamma)$ such that $A\thicksim [b^\prime_{ij}]$, where $b^\prime_{ij}=1$ for each edge $(i,j)$ in $F$. Then there exists $\pi:V(G_A)\to \Gamma\setminus \{ 0\}$ such that $[b^\prime_{ij}]=D(\pi)^{-1}BD(\pi)$, where $D(\pi)= \text{diag}[ \pi(1),\ldots,\pi(n)]$. Let the number of components of $F$ be $k$, and let $v_1,\ldots,v_k$ be the roots (chosen) in the components of $F$. For convenience, assume $\pi(v_r)=\alpha_r$ for each $r\in \{1,\ldots,k\}$. Let $w$ be a vertex in the component of $F$ having $v_r$ as a root, and let $P_{wv_r}\coloneqq w w_1w_2\cdots w_t v_r$ be the unique $wv_r$-path in $F$. We have
\begin{align*}
  & b^\prime_{ww_1}b^\prime_{w_1w_2}\cdots b^\prime_{w_tv_r}=1\\
\text{or, }  & \overline{\pi(w)}a_{ww_1}\pi(w_1) \overline{\pi(w_1)}a_{w_1w_2}\pi(w_2)\cdots \overline{\pi(w_t)}a_{w_tv_r}\pi(v_r)=1\\
\text{or, }  & \pi(w)= \pi(v_r)a_{ww_1}a_{w_1w_2}\cdots a_{w_tv_r} =\alpha_r \zeta_A(P_{wv_r})=\alpha_r \theta(w) .
\end{align*}
Hence the switching $\pi$ is unique up to a scalar multiple in each component of $F$.
\end{proof}

\begin{theorem}\label{sweq}
If $A,B\in \mathcal{H}_{n}(\Gamma)$ have the same graph $G$, then $A$ and $B$ are switching equivalent if and only if $\zeta_{A}(C)=\zeta_{B}(C)$ for each cycle $C\in \mathcal{B}_F(G)$, where $F$ is a maximal forest of $G$.
\end{theorem}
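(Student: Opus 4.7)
The forward implication follows immediately from the computation displayed just before the statement of Lemma~\ref{1}: if $A \thicksim B$ via a diagonal $D(\theta)$, then for every cycle $C$ in $G$ the $\theta$-factors along $C$ telescope to $1$, giving $\zeta_A(C) = \zeta_B(C)$. In particular this holds for each $C \in \mathcal{B}_F(G)$.

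For the converse, the plan is to reduce to a canonical form using Lemma~\ref{1} and then read off equality entry by entry. First I would observe that switching equivalence is an equivalence relation on $\mathcal{H}_n(\Gamma)$ (the diagonal matrices with entries in $\Gamma\setminus\{0\}$ form a group under multiplication), so if I can exhibit a common switching partner for $A$ and $B$, I am done. Applying Lemma~\ref{1} to each of $A$ and $B$ with the same maximal forest $F$, I obtain matrices $A' = [a'_{ij}]$ and $B' = [b'_{ij}]$ in $\mathcal{H}_n(\Gamma)$ with $A \thicksim A'$, $B \thicksim B'$, and $a'_{ij} = b'_{ij} = 1$ for every edge $(i,j) \in E(F)$. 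My goal is to prove $A' = B'$.

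By the forward direction (already established), switching preserves cycle gains, so $\zeta_{A'}(C) = \zeta_A(C) = \zeta_B(C) = \zeta_{B'}(C)$ for every $C \in \mathcal{B}_F(G)$. Now fix a non-tree edge $e = (i,j) \in E(G)\setminus E(F)$ and write the fundamental cycle as $C_F(e) = i\, j\, w_1 w_2 \cdots w_t\, i$, where the edges $(j,w_1), (w_1,w_2), \dots, (w_t,i)$ all lie in $F$. Then
\[
\zeta_{A'}(C_F(e)) = a'_{ij}\, a'_{jw_1}\, a'_{w_1 w_2} \cdots a'_{w_t i} = a'_{ij},
\]
since every factor except the first equals $1$, and similarly $\zeta_{B'}(C_F(e)) = b'_{ij}$. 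The hypothesis forces $a'_{ij} = b'_{ij}$. Combined with agreement on $E(F)$ and the fact that both matrices vanish off $E(G)$, this yields $A' = B'$, and therefore $A \thicksim B$ by transitivity.

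The whole argument is essentially bookkeeping, so I do not anticipate a serious obstacle; the only step that required thought was recognizing that after normalizing the forest entries to $1$, each fundamental-cycle gain collapses to the single non-tree edge's entry, which is precisely why the fundamental cycle basis is the right object to test. A minor technical point is that one must verify the equivalence relation properties and check that the reduction in Lemma~\ref{1} can be carried out with the same forest $F$ for both $A$ and $B$, which is fine because $G_A = G_B = G$.
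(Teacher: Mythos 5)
Your proof is correct and follows essentially the same route as the paper: apply Lemma~\ref{1} to normalize both matrices so that all forest edges have gain $1$, observe that each fundamental-cycle gain then collapses to the single non-tree entry, conclude $A'=B'$, and invoke the equivalence-relation property to get $A\thicksim B$. The only difference is that you spell out explicitly the telescoping of forest-edge gains and the transitivity step, which the paper leaves implicit.
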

\begin{proof} Let $F$ be a maximal forest of $G$. Apply Lemma~\ref{1} to find matrices $A^\prime:=[a_{ij}^\prime]$ and $B^\prime:=[b_{ij}^\prime]$ such that $A\thicksim A^\prime, B\thicksim B^\prime$ and $a_{ij}^\prime=1=b_{ij}^\prime$ for each edge $(i,j)$ in $F$. Assume that  $\zeta_{A}(C)=\zeta_{B}(C)$ for all cycles $C\in \mathcal{B}_F(G)$. Now $A^\prime=D(\theta)^{-1}AD(\theta)$  and  $B^\prime=D(\pi)^{-1}BD(\pi)$ for some  $D(\theta)=\text{diag}[ \theta(1),\ldots,\theta(n)]$ and $D(\pi)=\text{diag}[ \pi(1),\ldots,\pi(n)]$, where $\theta(i), \pi(i)\in \Gamma\setminus \{ 0\}$ for each $i\in \{1,\ldots,n\}$. This gives that  $\zeta_{A^\prime}(C)=\zeta_{B^\prime}(C)$ for each cycle $C\in \mathcal{B}_F(G)$. 

Let $(i,j)\notin E(F)$, and let $C$ be the cycle in $\mathcal{B}_F(G)$ containing the edge $(i,j)$. We have
\[a_{ij}^\prime=\zeta_{A^\prime}(C)=\zeta_{B^\prime}(C)=b_{ij}^\prime.\]
Thus $A^\prime=B^\prime$. This gives $D(\theta)^{-1}AD(\theta)=D(\pi)^{-1}BD(\pi) $, and hence $A$ and $B$ are switching equivalent. 

Conversely, assume that $A$ and $B$ are switching equivalent. Then clearly $\zeta_{A}(C)=\zeta_{B}(C)$ for each cycle $C\in \mathcal{B}_F(G)$.
\end{proof}

An easy consequence of Theorem~\ref{sweq} on mixed graphs is presented in the following. 

\begin{corollary}\label{weight} 
If $A,B\in \mathcal{H}_{n}(\Gamma)$ have the same graph $G$, then $\zeta_{A}(C)=\zeta_{B}(C)$ for each cycle $C\in \mathcal{B}_F(G)$ if and only if $\zeta_{A}(C)=\zeta_{B}(C)$ for each cycle $C$ in $G$. 
\end{corollary}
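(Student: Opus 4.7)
The statement is a bi-implication, and one direction is immediate: since $\mathcal{B}_F(G)$ consists of particular cycles of $G$, if $\zeta_A(C) = \zeta_B(C)$ holds for every cycle of $G$, it certainly holds for the fundamental cycles. So the content of the corollary lies entirely in the forward direction, and my plan is to derive it as a direct consequence of Theorem~\ref{sweq} combined with the chain of equalities already carried out in the introduction.

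Concretely, the plan is the following. Assume $\zeta_A(C)=\zeta_B(C)$ for each $C \in \mathcal{B}_F(G)$. First, invoke Theorem~\ref{sweq} to conclude that $A \thicksim B$, i.e.\ there is a diagonal matrix $D(\theta)$ with entries in $\Gamma \setminus \{0\}$ such that $D(\theta)^{-1} A D(\theta) = B$. Second, appeal to the telescoping computation done in the introduction (the one showing that entries transform by $b_{uv} = \overline{\theta(u)}\,\theta(v)\,a_{uv}$, so that in the gain of any cycle $C: v_1 v_2 \cdots v_k v_1$, each factor $\theta(v_j)\overline{\theta(v_j)} = 1$ cancels) to obtain $\zeta_A(C) = \zeta_B(C)$ for every cycle $C$ in $G$, not just those in $\mathcal{B}_F(G)$.

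So the corollary is essentially free once Theorem~\ref{sweq} is in hand; the role of Theorem~\ref{sweq} is to ``propagate'' the agreement of gains from the fundamental cycles to all cycles via a single switching. There is no real obstacle here — the only thing to be mildly careful about is to state explicitly that the cancellation computation from the introduction holds verbatim for an arbitrary cycle of $G$, regardless of whether it belongs to the chosen fundamental cycle basis. The reverse implication requires only the observation $\mathcal{B}_F(G) \subseteq \{\text{cycles of } G\}$, and can be dispatched in one line.
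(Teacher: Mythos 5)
Your proposal is correct and follows exactly the paper's argument: the reverse direction is dismissed as trivial, and the forward direction applies Theorem~\ref{sweq} to get $A \thicksim B$ and then the telescoping cancellation from the introduction to conclude $\zeta_A(C)=\zeta_B(C)$ for every cycle $C$ in $G$. Nothing is missing.
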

\begin{proof}
Converse part is trivial. Assume that $\zeta_{A}(C)=\zeta_{B}(C)$ for each cycle $C\in \mathcal{B}_F(G)$. Then Theorem ~\ref{sweq} gives that $A$ and $B$ are switching equivalent. Hence $\zeta_{A}(C)=\zeta_{B}(C)$ for each cycle $C$ in $G$. 
\end{proof}

The following corollaries are immediate consequences of Theorem~\ref{sweq}.

\begin{corollary} Let $A,B\in \mathcal{H}_{n}(\Gamma)$ be two matrices with the same graph $G$. If $a_{uv}=b_{uv}$ for each non cut-edge $(u,v)$ in $G$, then $A$ and $B$ are switching equivalent.
\end{corollary}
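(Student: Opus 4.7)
The plan is to reduce the statement directly to Theorem~\ref{sweq} by exploiting the classical graph-theoretic fact that an edge of $G$ is a cut-edge if and only if it does not lie on any cycle of $G$. Consequently, the non cut-edges of $G$ are precisely the edges that appear on at least one cycle.

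First, I would fix a maximal forest $F$ of $G$ and consider the fundamental cycle basis $\mathcal{B}_F(G)$. For any $C \in \mathcal{B}_F(G)$, every edge of $C$ lies on the cycle $C$ itself, so no edge of $C$ is a cut-edge of $G$. By hypothesis, $a_{uv}=b_{uv}$ holds along every edge $(u,v)$ of $C$. Writing $C : v_1 v_2 \cdots v_k v_1$, we therefore get
\[
\zeta_A(C) = a_{v_1 v_2} a_{v_2 v_3} \cdots a_{v_k v_1} = b_{v_1 v_2} b_{v_2 v_3} \cdots b_{v_k v_1} = \zeta_B(C).
\]
Since this holds for every $C \in \mathcal{B}_F(G)$, Theorem~\ref{sweq} immediately yields $A \thicksim B$.

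There is essentially no obstacle in this argument; the only content is the observation that cut-edges cannot lie on any cycle, so in particular they cannot lie on any fundamental cycle. The hypothesis that $A$ and $B$ agree on all non cut-edges is exactly what is needed to make $\zeta_A$ and $\zeta_B$ coincide on every element of the fundamental cycle basis, after which Theorem~\ref{sweq} does the rest.
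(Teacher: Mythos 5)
Your proof is correct and matches the paper's intent exactly: the paper states this corollary as an immediate consequence of Theorem~\ref{sweq}, and your argument (cut-edges lie on no cycle, so all edges of each fundamental cycle are non cut-edges, hence the gains of fundamental cycles agree) is precisely the routine verification the paper leaves implicit.
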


\begin{corollary} Let $A,B\in \mathcal{H}_{n}(\Gamma)$ be two matrices with the same graph $G$. If $G$ is a forest, then $A$ and $B$ are switching equivalent.
\end{corollary}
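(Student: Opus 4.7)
The plan is to observe that this corollary follows immediately from Theorem~\ref{sweq} once we note that a forest has no cycles at all, so any cycle-based condition is vacuously satisfied.

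More concretely, first I would take a maximal forest $F$ of $G$. Since $G$ itself is a forest, the maximal forest $F$ of $G$ is $G$ itself, so $E(G)\setminus E(F)=\emptyset$. Consequently $\mathcal{B}_F(G)=\{C_F(e):e\in E(G)\setminus E(F)\}=\emptyset$, i.e.\ $G$ has no fundamental cycles with respect to $F$ (indeed, no cycles at all).

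Next I would invoke Theorem~\ref{sweq}: two matrices $A,B\in\mathcal{H}_n(\Gamma)$ sharing the same graph are switching equivalent if and only if $\zeta_A(C)=\zeta_B(C)$ for every $C\in\mathcal{B}_F(G)$. Since $\mathcal{B}_F(G)=\emptyset$, this condition is vacuously satisfied, and therefore $A\thicksim B$.

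There is no real obstacle here; the only thing to verify is the reduction of the hypothesis "$G$ is a forest" to "$\mathcal{B}_F(G)=\emptyset$", which is immediate from the definition of a maximal forest. The proof can thus be written in two or three lines invoking Theorem~\ref{sweq}.
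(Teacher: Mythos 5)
Your argument is correct and matches the paper's intent: the corollary is stated there as an immediate consequence of Theorem~\ref{sweq}, and your reduction (taking $F=G$, so $\mathcal{B}_F(G)=\emptyset$ and the gain condition holds vacuously) is exactly the reasoning the paper leaves implicit.
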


\begin{theorem}
If $A,B\in \mathcal{H}_{n}(\Gamma)$ have the same graph $G$, then $A$ and $B$ are switching equivalent if and only if $\zeta_{A}(C)=\zeta_{B}(C)$ for each chordless cycle $C$ in $G$.
\end{theorem}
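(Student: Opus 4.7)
The plan is to reduce the statement to Theorem~\ref{sweq} (or equivalently Corollary~\ref{weight}) by showing that agreement of the cycle gains on all chordless cycles forces agreement on every cycle of $G$. The forward direction is immediate: switching equivalence preserves the gain of every cycle, as computed in the paragraph preceding Theorem~\ref{sweq}, so in particular the gains coincide on chordless cycles.

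For the converse, I would induct on the length of the cycle. A cycle of length $3$ is automatically chordless, and any chordless cycle is handled by the hypothesis. Suppose the claim holds for all cycles of length less than $k$, and let $C = v_1 v_2 \cdots v_k v_1$ be a cycle of length $k$ that possesses a chord $e = (v_i, v_j)$ with $1 \le i < j \le k$ and $j - i \ge 2$, $(i,j) \neq (1,k)$. The chord $e$ splits $C$ into two strictly shorter cycles
\[
C_1 \coloneqq v_i v_{i+1} \cdots v_j v_i, \qquad C_2 \coloneqq v_j v_{j+1} \cdots v_k v_1 \cdots v_i v_j.
\]
A direct expansion of the gain products yields
\[
\zeta_A(C_1)\, \zeta_A(C_2) \;=\; \zeta_A(C)\cdot a_{v_j v_i} a_{v_i v_j},
\]
and since $\Gamma \setminus \{0\}$ consists of unit-modulus complex numbers and $A$ is Hermitian, $a_{v_j v_i} a_{v_i v_j} = \overline{a_{v_i v_j}}\, a_{v_i v_j} = 1$. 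Hence $\zeta_A(C) = \zeta_A(C_1)\,\zeta_A(C_2)$, and the analogous identity holds for $B$. The inductive hypothesis applied to $C_1$ and $C_2$ gives $\zeta_A(C_1) = \zeta_B(C_1)$ and $\zeta_A(C_2) = \zeta_B(C_2)$, so $\zeta_A(C) = \zeta_B(C)$. The induction closes, so the gains agree on every cycle of $G$, and Theorem~\ref{sweq} produces $A \thicksim B$.

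The only nontrivial point is the chord-splitting identity: one must verify that when $C$ is reconstituted from the concatenation of $C_1$ and $C_2$ along $e$, the chord is traversed in one direction in $C_1$ and in the opposite direction in $C_2$, producing the factor $a_{v_j v_i} a_{v_i v_j}$ that collapses to $1$. This is precisely the point at which the unit-modulus hypothesis on $\Gamma$ enters; apart from this bookkeeping, the argument is a clean strong induction reducing arbitrary cycles to chordless ones.
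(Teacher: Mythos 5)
Your proposal is correct and matches the paper's argument in all essentials: the forward direction is the standard gain-preservation computation, and the converse uses the same chord-splitting identity $\zeta_A(C)=\zeta_A(C_1)\zeta_A(C_2)$ with the factor $a_{v_iv_j}\overline{a_{v_iv_j}}=1$, followed by an appeal to Theorem~\ref{sweq}. The only cosmetic difference is that you run a strong induction on cycle length while the paper phrases the same reduction as a minimal-counterexample argument.
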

\begin{proof}
 Assume that $\zeta_{A}(C)=\zeta_{B}(C)$ for each chordless cycle $C$ in $G$. We now show that $\zeta_{A}(C)=\zeta_{B}(C)$ for each cycle $C$ in $G$. Suppose, on the contrary, that there is a cycle $C$ in $G$ of least length such that $\zeta_{A}(C)\neq \zeta_{B}(C)$. So, $C$ contains a chord, say $v_1u_1$ such that $C:=v_1\ldots v_pu_1\ldots u_qv_1$. Let $C_1 \coloneqq v_1\ldots v_pu_1v_1$ and $C_2 \coloneqq u_1\ldots u_qv_1u_1$. If $C_1$ and $C_2$ are chordless, then clearly $\zeta_{A}(C_j)=\zeta_{B}(C_j)$ for $j\in \{1,2\}$. If either $C_1$ or $C_2$ has a chord, then also by the choice of $C$, we have $\zeta_{A}(C_j)=\zeta_{B}(C_j)$ for $j\in \{1,2\}$. Since $a_{u_1v_1}.\overline{a}_{u_1v_1}=1=b_{u_1v_1}.\overline{b}_{u_1v_1}$, we have $$\zeta_{A}(C)=\frac{\zeta_{A}(C_1) {\zeta_{A}(C_2)}}{a_{u_1v_1}\overline{a}_{u_1v_1}}=\frac{\zeta_{B}(C_1) {\zeta_{B}(C_2)}}{b_{u_1v_1}\overline{b}_{u_1v_1}}= \zeta_{B}(C).$$ This contradicts our assumption. Thus the result follows. The proof of the other part follows directly from the equivalence of $A$ and $B$. 
\end{proof}

In the next theorem, we characterize switching equivalence of the matrices $A$ and $-A$ in $\mathcal{H}_{n}(\Gamma)$ in terms of bipartite graphs.
\begin{theorem} If $A\in \mathcal{H}_{n}(\Gamma)$, then $A$ and $-A$ are switching equivalent if and only if $G_A$ is bipartite.
\end{theorem}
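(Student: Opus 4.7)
The plan is to combine Theorem~\ref{sweq} with Corollary~\ref{weight} and a one-line computation showing how cycle gains transform under negation of the matrix.

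First, I would observe that $G_{A}=G_{-A}$, since negation preserves the support of the entries, so $A$ and $-A$ share the same underlying graph. For any cycle $C = v_1 v_2 \cdots v_k v_1$ in $G_A$, a direct calculation gives
\[
\zeta_{-A}(C) \;=\; \prod_{j=1}^{k}\bigl(-a_{v_j v_{j+1}}\bigr) \;=\; (-1)^{k}\,\zeta_A(C),
\]
with indices taken modulo $k$. Since every edge gain $a_{uv}$ lies in the group $\Gamma\setminus\{0\}$ of unit modulus complex numbers, $\zeta_A(C)\neq 0$, so the equality $\zeta_A(C) = \zeta_{-A}(C)$ is equivalent to $(-1)^k = 1$, i.e.\ to $|C|$ being even.

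Next, I would invoke Theorem~\ref{sweq} together with Corollary~\ref{weight}: $A$ and $-A$ are switching equivalent if and only if $\zeta_A(C) = \zeta_{-A}(C)$ holds for every cycle $C$ in $G_A$. By the computation above, this is in turn equivalent to every cycle of $G_A$ having even length, which is the standard combinatorial characterization of bipartiteness. This disposes of both directions in one stroke.

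I do not foresee any real obstacle. The only subtle point is noting $\zeta_A(C)\neq 0$, which is what allows one to isolate the factor $(-1)^k$. The degenerate case in which $G_A$ is a forest is handled automatically: a forest is bipartite, and the corollary immediately preceding this theorem already asserts $A \thicksim B$ whenever $A,B$ share a forest as underlying graph, in particular $A\thicksim -A$.
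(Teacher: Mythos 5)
Your proposal is correct and follows essentially the same route as the paper: both reduce switching equivalence of $A$ and $-A$ to the equality $\zeta_{-A}(C)=\zeta_A(C)$ for all cycles $C$ (via Theorem~\ref{sweq} and Corollary~\ref{weight}) and then note that $\zeta_{-A}(C)=(-1)^{|C|}\zeta_A(C)$, so equality holds exactly when every cycle has even length, i.e.\ $G_A$ is bipartite. Your version merely spells out the sign computation and the vacuous forest case that the paper leaves implicit.
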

\begin{proof} The matrices $A$ and $-A$ are switching equivalent if and only if $\zeta_{-A}(C)=\zeta_{A}(C)$ for all cycles $C$ in $G_A$, which is true if and only if the length of each cycle in $G_A$ is even. Hence the proof follows.
\end{proof}

\begin{corollary}\label{EigSymZero} Let $A\in \mathcal{H}_{n}(\Gamma)$. If $G_A$ is bipartite, then the eigenvalues of $A$ are symmetric about zero.
\end{corollary}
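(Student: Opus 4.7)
The plan is to combine the immediately preceding theorem with the elementary observation that switching equivalence is a similarity transformation. Concretely, since $G_A$ is assumed bipartite, the previous theorem yields that $A$ and $-A$ are switching equivalent, so there exists a diagonal matrix $D(\theta)$ with nonzero diagonal entries satisfying $D(\theta)^{-1} A D(\theta) = -A$. This is a similarity, so $A$ and $-A$ share the same characteristic polynomial and, in particular, the same multiset of eigenvalues, i.e.\ $\Sp(A) = \Sp(-A)$.

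The final step is the trivial identity $\Sp(-A) = \{-\lambda : \lambda \in \Sp(A)\}$, valid with multiplicities for any square matrix. Combining this with $\Sp(A) = \Sp(-A)$ gives $\Sp(A) = -\Sp(A)$, which is precisely the statement that the spectrum is symmetric about zero: whenever $\lambda$ is an eigenvalue of $A$ with multiplicity $m$, so is $-\lambda$ with the same multiplicity.

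There is no real obstacle here; the corollary is essentially a one-line deduction from the previous theorem plus the fact (already noted in the introduction) that switching-equivalent matrices are similar and hence cospectral. The only thing worth spelling out carefully, if desired, is that one can give a purely matrix-theoretic proof without invoking the characterization via bipartiteness: the eigenvalue equation $Ax = \lambda x$ combined with $D(\theta)^{-1} A D(\theta) = -A$ yields $A(D(\theta)x) = -\lambda (D(\theta)x)$, so $D(\theta)x$ is an eigenvector of $A$ for the eigenvalue $-\lambda$, and since $D(\theta)$ is invertible, this correspondence is a bijection preserving multiplicities.
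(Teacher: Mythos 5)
Your argument is correct and is exactly the paper's proof: the preceding theorem gives $A \thicksim -A$ when $G_A$ is bipartite, and since switching equivalence is a similarity, $\Sp(A)=\Sp(-A)=-\Sp(A)$, i.e.\ the spectrum is symmetric about zero. The paper states this in one line; you have merely spelled out the same deduction (including the optional eigenvector-level verification) in more detail.
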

\begin{proof} Switching equivalence of $A$ and $-A$ implies that eigenvalues of $A$ are symmetric about zero.
\end{proof}

The converse of Corollary ~\ref{EigSymZero} need not be true. Consider the Hermitian adjacency matrix of the mixed graph $C_3^{\phi}$ in Figure \ref{figure1}. Here spectrum of $H(C_3^{\phi})$ is $\{ 0,\pm \sqrt{3}\}$. However, $C_3$ is not bipartite.

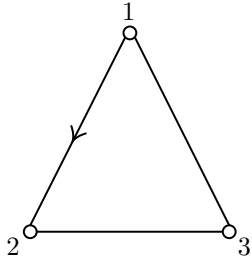
\begin{figure}[ht]
\centering
\tikzset{every picture/.style={line width=0.75pt}} 
\begin{tikzpicture}[x=0.25pt,y=0.25pt,yscale=-1,xscale=1]
\draw    (400.25,391.08) -- (544.29,105.75) ;
\draw    (558.71,106.75) -- (701.25,391.08) ;
\draw    (408.96,400.79) -- (690.54,400.79) ;
\draw  [color={rgb, 255:red, 0; green, 0; blue, 0 }  ,draw opacity=1 ][line width=0.75]  (691.54,400.79) .. controls (691.54,395.43) and (695.89,391.08) .. (701.25,391.08) .. controls (706.61,391.08) and (710.96,395.43) .. (710.96,400.79) .. controls (710.96,406.15) and (706.61,410.5) .. (701.25,410.5) .. controls (695.89,410.5) and (691.54,406.15) .. (691.54,400.79) -- cycle ;
\draw  [color={rgb, 255:red, 0; green, 0; blue, 0 }  ,draw opacity=1 ][line width=0.75]  (541.29,99.75) .. controls (541.29,94.39) and (545.64,90.04) .. (551,90.04) .. controls (556.36,90.04) and (560.71,94.39) .. (560.71,99.75) .. controls (560.71,105.11) and (556.36,109.46) .. (551,109.46) .. controls (545.64,109.46) and (541.29,105.11) .. (541.29,99.75) -- cycle ;
\draw  [line width=0.75]  (485.35,251.61) .. controls (475.76,254.32) and (468.78,258.36) .. (464.39,263.72) .. controls (466.18,257.02) and (465.38,248.99) .. (461.99,239.63) ;
\draw  [color={rgb, 255:red, 0; green, 0; blue, 0 }  ,draw opacity=1 ][line width=0.75]  (390.54,400.79) .. controls (390.54,395.43) and (394.89,391.08) .. (400.25,391.08) .. controls (405.61,391.08) and (409.96,395.43) .. (409.96,400.79) .. controls (409.96,406.15) and (405.61,410.5) .. (400.25,410.5) .. controls (394.89,410.5) and (390.54,406.15) .. (390.54,400.79) -- cycle ;
\draw (535,50) node [anchor=north west][inner sep=0.75pt]    {$1$};
\draw (360,405) node [anchor=north west][inner sep=0.75pt]    {$2$};
\draw (709.96,405) node [anchor=north west][inner sep=0.75pt]    {$3$};
\end{tikzpicture}
\caption{Mixed graph $C_3^{\phi}$}\label{figure1}
\end{figure}

Assume that $A\in \mathcal{H}_{n}(\Gamma)$. An \textit{elementary} graph with respect to $A$ is a subgraph of $G_A$ such that every component is an edge or a cycle. Let $\mathcal{E}_k(A)$ denote the set of all elementary graphs with respect to $A$ of order $k$. For any $H\in \mathcal{E}_k(A)$, let $\mathcal{C}(H)$ denote the set of cycles in $H$, $c(H)$ denote the number of cycles in $H$, and $k(H)$ denote the number of components of $H$. Let the characteristic polynomial of $A$ be
$$\Phi(A,x)\coloneqq x^n + a_1x^{n-1} +\cdots + a_n.$$ 
Recall that $\Re_A(C)$ denotes the real part of $\zeta_A(C)$. The following results appear in~\cite{germina2012balance}.

\begin{theorem}[\cite{germina2012balance}]\label{CharPol}  
The coefficients of the characteristic polynomial of a matrix $A\in \mathcal{H}_{n}(\Gamma)$ are given by
$$ a_k= \sum_{H\in \mathcal{E}_k(A)} (-1)^{k(H)} 2^{c(H)} \prod_{C \in \mathcal{C}(H)} \Re_A(C) \textnormal{ for all } k\in \{1,\ldots,n\}.$$
\end{theorem}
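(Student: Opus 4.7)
The plan is to mimic the classical Sachs/Harary–Coulson coefficient theorem for adjacency matrices, adapting it to the Hermitian setting with entries in $\Gamma$. The starting point is the standard identity
\[
a_k = (-1)^k \sum_{S \subseteq V(G_A),\, |S|=k} \det(A[S]),
\]
where $A[S]$ denotes the principal submatrix of $A$ indexed by $S$. So the whole task reduces to computing $\det(A[S])$ in graph-theoretic terms.

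Next I would expand each $\det(A[S])$ via the Leibniz formula $\sum_\sigma \mathrm{sgn}(\sigma) \prod_{i \in S} a_{i,\sigma(i)}$ and classify the permutations $\sigma \in \mathrm{Sym}(S)$ that contribute a nonzero term. Since the diagonal of $A$ is zero, $\sigma$ must have no fixed points; moreover, every cycle of $\sigma$ of length $\geq 2$ must use only edges of $G_A$. Thus the cycle decomposition of $\sigma$ determines an elementary subgraph $H \in \mathcal{E}_k(A)$ on vertex set $S$ whose components are either $2$-cycles (edges of $G_A$) or cycles of length $\geq 3$ (cycles of $G_A$). Conversely, each such $H$ is induced by exactly $2^{c(H)}$ permutations, since each cycle of $H$ of length $\geq 3$ can be traversed in two directions, while an edge contributes a unique transposition.

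Now I would evaluate the per-permutation contribution cycle-by-cycle. For a transposition $(i\,j)$ corresponding to an edge of $H$, the factor is $a_{ij}a_{ji} = a_{ij}\overline{a_{ij}} = |a_{ij}|^2 = 1$ because entries lie in $\Gamma$ and so have unit modulus. For a cycle $C$ of length $\geq 3$ in $H$, summing over its two orientations yields $\zeta_A(C) + \overline{\zeta_A(C)} = 2\,\Re_A(C)$, using Hermicity $a_{ji} = \overline{a_{ij}}$. Since the choices of orientation on distinct cycles are independent, summing the Leibniz terms corresponding to the $2^{c(H)}$ permutations of $H$ gives the product $\prod_{C \in \mathcal{C}(H)} 2\,\Re_A(C) = 2^{c(H)}\prod_{C \in \mathcal{C}(H)} \Re_A(C)$.

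The final step is the sign bookkeeping, which I expect to be the fiddliest part of the argument. All permutations inducing a fixed $H$ share the same sign, because the sign depends only on the multiset of cycle lengths. A cycle of length $\ell$ contributes $(-1)^{\ell-1}$, so if $H$ has total vertex count $k$ and $k(H)$ components, then $\mathrm{sgn}(\sigma) = \prod_{\text{components }D}(-1)^{|D|-1} = (-1)^{k-k(H)}$. Assembling:
\[
\det(A[S]) = \sum_{H \in \mathcal{E}_k(A),\, V(H)=S} (-1)^{k-k(H)} \, 2^{c(H)} \prod_{C \in \mathcal{C}(H)} \Re_A(C),
\]
and multiplying by $(-1)^k$ and summing over $S$ collapses $(-1)^{k}\cdot(-1)^{k-k(H)}$ to $(-1)^{k(H)}$, giving the stated formula. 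The main obstacle is really just to verify the orientation-sum identity and the sign identity carefully; the rest is the usual determinant-combinatorics pipeline.
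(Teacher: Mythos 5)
Your proposal is correct: the reduction of $a_k$ to principal minors, the Leibniz-expansion classification of fixed-point-free permutations by their underlying elementary subgraph, the orientation-sum giving $2\,\Re_A(C)$ per cycle and $a_{ij}\overline{a_{ij}}=1$ per edge component, and the sign computation $(-1)^k(-1)^{k-k(H)}=(-1)^{k(H)}$ all check out. Note that the paper does not prove this statement at all — it is imported from the cited reference of Hameed and Germina — and the proof there is essentially this same Sachs-type coefficient argument, so your route coincides with the standard one rather than offering a genuinely different alternative.
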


\begin{corollary}[\cite{germina2012balance}] If $A\in \mathcal{H}_{n}(\Gamma)$, then $$\mathrm{det}(A)=(-1)^n \sum_{H\in \mathcal{E}_n(A)} (-1)^{k(H)} 2^{c(H)} \prod_{C \in \mathcal{C}(H)} \Re_A(C).$$
\end{corollary}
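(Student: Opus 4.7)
The plan is to deduce the determinant formula as the $k=n$ case of Theorem \ref{CharPol}, using the standard relation between the constant term of the characteristic polynomial and the determinant.

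First I would recall that $\Phi(A,x) = \det(xI-A) = x^n + a_1 x^{n-1} + \cdots + a_n$, so evaluating at $x=0$ yields
\[
a_n = \Phi(A,0) = \det(-A) = (-1)^n \det(A),
\]
and hence $\det(A) = (-1)^n a_n$. Next I would invoke Theorem \ref{CharPol} with $k=n$, which gives
\[
a_n = \sum_{H\in \mathcal{E}_n(A)} (-1)^{k(H)}\, 2^{c(H)} \prod_{C\in \mathcal{C}(H)} \Re_A(C).
\]
Multiplying through by $(-1)^n$ produces the claimed identity.

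There is essentially no obstacle here: the statement is the specialization $k=n$ of the previous theorem, once one observes that the constant coefficient of $\Phi(A,x)$ differs from $\det(A)$ by the sign $(-1)^n$. The only thing worth verifying explicitly is that $\mathcal{E}_n(A)$ indeed indexes all spanning elementary subgraphs of $G_A$, which is built into the notation (``elementary graph of order $k$'' with $k=n$ means a spanning subgraph each of whose components is an edge or a cycle), so no additional combinatorial argument is needed.
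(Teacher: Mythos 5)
Your proposal is correct and is exactly the intended argument: the paper presents this as an immediate corollary of Theorem~\ref{CharPol} (cited from the same source) with no separate proof, and your deduction — $a_n = \Phi(A,0) = \det(-A) = (-1)^n\det(A)$, then substituting the $k=n$ case of the theorem — is the standard way to obtain it. No gaps.
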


In the discussion following Corollary~\ref{EigSymZero}, we observed that even if the eigenvalues of a matrix $A$ in $\mathcal{H}_{n}(\Gamma)$ are symmetric about zero, the corresponding graph $G_A$ need not be bipartite. However, the converse of Corollary~\ref{EigSymZero} is true under a sufficient condition, as mentioned in the next theorem. 

\begin{theorem}[\cite{mehatari2020adjacency}]
 Let $A\in \mathcal{H}_{n}(\Gamma)$ and $\sum_{C\in \mathcal{C}_{k}(G_A)} \Re_A(C)\neq 0$ for all $k$, where $\mathcal{C}_k(G_A)$ is the set of all cycles of length $k$ in $G_A$. If eigenvalues of $A$ are symmetric about zero, then $G_A$ is bipartite.
\end{theorem}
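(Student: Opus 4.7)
The plan is to proceed by contradiction. I would assume $G_A$ is not bipartite, so $G_A$ has at least one odd cycle, and let $k$ denote the smallest odd integer for which $\mathcal{C}_k(G_A)$ is nonempty. The strategy is to compute the coefficient $a_k$ of $\Phi(A,x)$ in two different ways and reach a contradiction: the spectral hypothesis will force $a_k=0$, while the cycle-sum hypothesis combined with Theorem \ref{CharPol} will force $a_k\neq 0$.

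For the first computation I would translate the symmetry of the spectrum into a functional equation for the characteristic polynomial. Since the eigenvalues come in pairs $\pm\lambda$, the identity $\Phi(A,-x)=(-1)^n \Phi(A,x)$ holds, and comparing coefficients yields $a_j=0$ for every odd $j$. In particular $a_k=0$.

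For the second computation I would apply Theorem \ref{CharPol} and argue that the sum over $\mathcal{E}_k(A)$ collapses to a sum over single cycles of length $k$. The crucial observation is this: if $H\in\mathcal{E}_k(A)$ has cycle components of lengths $\ell_1,\ldots,\ell_r$ together with $s$ disjoint edges, then $\ell_1+\cdots+\ell_r+2s=k$, and since $k$ is odd at least one $\ell_i$ must be odd; by the minimality of $k$ this forces $\ell_i=k$, and hence $r=1$ and $s=0$. Such a single-cycle $H$ satisfies $k(H)=c(H)=1$, so Theorem \ref{CharPol} gives
\[
a_k = -2 \sum_{C\in\mathcal{C}_k(G_A)} \Re_A(C),
\]
which is nonzero by the standing hypothesis. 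Comparing this with $a_k=0$ yields the contradiction, and therefore $G_A$ must be bipartite.

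The only step that requires genuine care is the minimality argument that removes all mixed elementary subgraphs from the sum for $a_k$; once that is in hand, the rest is bookkeeping, and the hypothesis on $\sum_{C\in\mathcal{C}_k(G_A)}\Re_A(C)$ is tailored exactly so that the surviving contribution does not vanish.
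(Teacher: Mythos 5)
The paper states this theorem only as a citation to \cite{mehatari2020adjacency} and gives no proof of its own, so there is no internal argument to compare against; judged on its own, your proof is correct and complete. Spectral symmetry indeed gives $\Phi(A,-x)=(-1)^n\Phi(A,x)$ and hence $a_j=0$ for every odd $j$, and for $k$ the smallest odd cycle length your parity-plus-minimality argument correctly shows every $H\in\mathcal{E}_k(A)$ is a single $k$-cycle, so Theorem~\ref{CharPol} collapses to $a_k=-2\sum_{C\in\mathcal{C}_k(G_A)}\Re_A(C)\neq 0$, the desired contradiction. This is the standard route (essentially that of the cited reference); the only point worth stating explicitly is that the hypothesis is read as applying to those $k$ with $\mathcal{C}_k(G_A)\neq\emptyset$, which your chosen $k$ satisfies by construction.
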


Let $A,B\in \mathcal{H}_{n}(\Gamma)$ have the same graph $G$. Using Theorem~\ref{CharPol}, we observe that if $\Re_A(C)= \Re_B(C)$ for all cycles $C$ in $G$, then $A$ and $B$ are cospectral. However, the converse of this statement need not be true. For example, consider the Hermitian adjacency matrices of the mixed graphs $G^{\phi}$ and $G^{\gamma}$ in Figure~\ref{figure2}. Here both $H(G^{\phi})$ and $H(G^{\gamma})$ are cospectral with the spectrum $\{ 0, \pm 1, \pm \sqrt{5}\}$. However, $\zeta_{H(G^{\phi})}(C)=-1$ and $\zeta_{H(G^{\gamma})}(C)=\mathbf{i}$ for the cycle $C:=123$.

\begin{figure}[ht]
\centering
\begin{subfigure}{0.45\textwidth}
\tikzset{every picture/.style={line width=0.75pt}} 
\begin{tikzpicture}[x=0.25pt,y=0.25pt,yscale=-1,xscale=1]
\draw    (110.71,100.75) -- (391.54,250.79) ;
\draw    (100,110.46) -- (99.25,391.08) ;
\draw    (109.96,398.79) -- (390.54,250.79) ;
\draw  [color={rgb, 255:red, 0; green, 0; blue, 0 }  ,draw opacity=1 ][line width=0.75]  (90.54,400.79) .. controls (90.54,395.43) and (94.89,391.08) .. (100.25,391.08) .. controls (105.61,391.08) and (109.96,395.43) .. (109.96,400.79) .. controls (109.96,406.15) and (105.61,410.5) .. (100.25,410.5) .. controls (94.89,410.5) and (90.54,406.15) .. (90.54,400.79) -- cycle ;
\draw  [color={rgb, 255:red, 0; green, 0; blue, 0 }  ,draw opacity=1 ][line width=0.75]  (91.29,100.75) .. controls (91.29,95.39) and (95.64,91.04) .. (101,91.04) .. controls (106.36,91.04) and (110.71,95.39) .. (110.71,100.75) .. controls (110.71,106.11) and (106.36,110.46) .. (101,110.46) .. controls (95.64,110.46) and (91.29,106.11) .. (91.29,100.75) -- cycle ;
\draw  [line width=0.75]  (253.29,162.29) .. controls (255.81,171.93) and (259.71,178.99) .. (264.98,183.49) .. controls (258.32,181.56) and (250.28,182.21) .. (240.85,185.41) ;
\draw  [color={rgb, 255:red, 0; green, 0; blue, 0 }  ,draw opacity=1 ][line width=0.75]  (390.54,250.79) .. controls (390.54,245.43) and (394.89,241.08) .. (400.25,241.08) .. controls (405.61,241.08) and (409.96,245.43) .. (409.96,250.79) .. controls (409.96,256.15) and (405.61,260.5) .. (400.25,260.5) .. controls (394.89,260.5) and (390.54,256.15) .. (390.54,250.79) -- cycle ;
\draw  [color={rgb, 255:red, 0; green, 0; blue, 0 }  ,draw opacity=1 ][line width=0.75]  (691.54,100.79) .. controls (691.54,95.43) and (695.89,91.08) .. (701.25,91.08) .. controls (706.61,91.08) and (710.96,95.43) .. (710.96,100.79) .. controls (710.96,106.15) and (706.61,110.5) .. (701.25,110.5) .. controls (695.89,110.5) and (691.54,106.15) .. (691.54,100.79) -- cycle ;
\draw  [color={rgb, 255:red, 0; green, 0; blue, 0 }  ,draw opacity=1 ][line width=0.75]  (690.54,400.79) .. controls (690.54,395.43) and (694.89,391.08) .. (700.25,391.08) .. controls (705.61,391.08) and (709.96,395.43) .. (709.96,400.79) .. controls (709.96,406.15) and (705.61,410.5) .. (700.25,410.5) .. controls (694.89,410.5) and (690.54,406.15) .. (690.54,400.79) -- cycle ;
\draw    (409.96,250.79) -- (691.54,100.79) ;
\draw    (409.96,250.79) -- (690.54,400.79) ;
\draw    (700.25,391.08) -- (701.25,110.5) ;
\draw  [line width=0.75]  (265.5,330.71) .. controls (255.93,327.94) and (247.87,327.67) .. (241.3,329.9) .. controls (246.37,325.16) and (249.93,317.92) .. (252.01,308.19) ;
\draw  [line width=0.75]  (540.17,166.25) .. controls (549.45,169.89) and (557.45,170.88) .. (564.19,169.26) .. controls (558.72,173.51) and (554.51,180.4) .. (551.56,189.91) ;
\draw  [line width=0.75]  (687.75,259.64) .. controls (695.15,252.97) and (699.62,246.26) .. (701.18,239.5) .. controls (702.54,246.3) and (706.81,253.15) .. (714,260.04) ;
\draw (85,50) node [anchor=north west][inner sep=0.75pt]    {$1$};
\draw (85,417) node [anchor=north west][inner sep=0.75pt]    {$3$};
\draw (685,413) node [anchor=north west][inner sep=0.75pt]    {$5$};
\draw (385,200) node [anchor=north west][inner sep=0.75pt]    {$2$};
\draw (685,50) node [anchor=north west][inner sep=0.75pt]    {$4$};
\draw (394,382.4) node [anchor=north west][inner sep=0.75pt]    {$G^{\phi}$};
\end{tikzpicture}
\end{subfigure}
\hfill
\begin{subfigure}{0.45\textwidth}
\tikzset{every picture/.style={line width=0.75pt}} 
\begin{tikzpicture}[x=0.25pt,y=0.25pt,yscale=-1,xscale=1]
\draw    (110.71,100.75) -- (391.54,250.79) ;
\draw    (100,110.46) -- (99.25,391.08) ;
\draw    (109.96,398.79) -- (390.54,250.79) ;
\draw  [color={rgb, 255:red, 0; green, 0; blue, 0 }  ,draw opacity=1 ][line width=0.75]  (90.54,400.79) .. controls (90.54,395.43) and (94.89,391.08) .. (100.25,391.08) .. controls (105.61,391.08) and (109.96,395.43) .. (109.96,400.79) .. controls (109.96,406.15) and (105.61,410.5) .. (100.25,410.5) .. controls (94.89,410.5) and (90.54,406.15) .. (90.54,400.79) -- cycle ;
\draw  [color={rgb, 255:red, 0; green, 0; blue, 0 }  ,draw opacity=1 ][line width=0.75]  (91.29,100.75) .. controls (91.29,95.39) and (95.64,91.04) .. (101,91.04) .. controls (106.36,91.04) and (110.71,95.39) .. (110.71,100.75) .. controls (110.71,106.11) and (106.36,110.46) .. (101,110.46) .. controls (95.64,110.46) and (91.29,106.11) .. (91.29,100.75) -- cycle ;
\draw  [line width=0.75]  (253.29,162.29) .. controls (255.81,171.93) and (259.71,178.99) .. (264.98,183.49) .. controls (258.32,181.56) and (250.28,182.21) .. (240.85,185.41) ;
\draw  [color={rgb, 255:red, 0; green, 0; blue, 0 }  ,draw opacity=1 ][line width=0.75]  (390.54,250.79) .. controls (390.54,245.43) and (394.89,241.08) .. (400.25,241.08) .. controls (405.61,241.08) and (409.96,245.43) .. (409.96,250.79) .. controls (409.96,256.15) and (405.61,260.5) .. (400.25,260.5) .. controls (394.89,260.5) and (390.54,256.15) .. (390.54,250.79) -- cycle ;
\draw  [color={rgb, 255:red, 0; green, 0; blue, 0 }  ,draw opacity=1 ][line width=0.75]  (691.54,100.79) .. controls (691.54,95.43) and (695.89,91.08) .. (701.25,91.08) .. controls (706.61,91.08) and (710.96,95.43) .. (710.96,100.79) .. controls (710.96,106.15) and (706.61,110.5) .. (701.25,110.5) .. controls (695.89,110.5) and (691.54,106.15) .. (691.54,100.79) -- cycle ;
\draw  [color={rgb, 255:red, 0; green, 0; blue, 0 }  ,draw opacity=1 ][line width=0.75]  (690.54,400.79) .. controls (690.54,395.43) and (694.89,391.08) .. (700.25,391.08) .. controls (705.61,391.08) and (709.96,395.43) .. (709.96,400.79) .. controls (709.96,406.15) and (705.61,410.5) .. (700.25,410.5) .. controls (694.89,410.5) and (690.54,406.15) .. (690.54,400.79) -- cycle ;
\draw    (409.96,250.79) -- (691.54,100.79) ;
\draw    (409.96,250.79) -- (690.54,400.79) ;
\draw    (700.25,391.08) -- (701.25,110.5) ;
\draw  [line width=0.75]  (540.17,166.25) .. controls (549.45,169.89) and (557.45,170.88) .. (564.19,169.26) .. controls (558.72,173.51) and (554.51,180.4) .. (551.56,189.91) ;
\draw (85,50) node [anchor=north west][inner sep=0.75pt]    {$1$};
\draw (85,417) node [anchor=north west][inner sep=0.75pt]    {$3$};
\draw (685,413) node [anchor=north west][inner sep=0.75pt]    {$5$};
\draw (385,200) node [anchor=north west][inner sep=0.75pt]    {$2$};
\draw (685,50) node [anchor=north west][inner sep=0.75pt]    {$4$};
\draw (394,382.4) node [anchor=north west][inner sep=0.75pt]    {$G^{\gamma}$};
\end{tikzpicture}
\end{subfigure}
\caption{The graphs $G^{\phi}$ and $G^{\gamma}$} \label{figure2}
\end{figure}
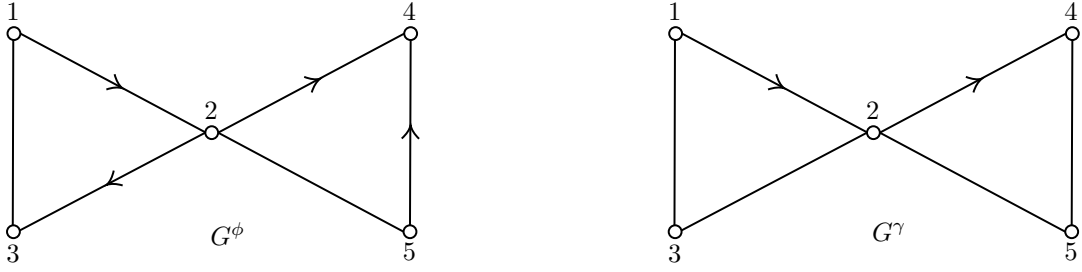

 In the next two results, we establish two sufficient conditions under which $\Re_A(C)= \Re_B(C)$ for each cycle $C$ in $G$ whenever $A$ and $B$ are two cospectral matrices in $\mathcal{H}_{n}(\Gamma)$ with the same graph $G$. Some similar results appear in \cite{samanta2019spectrum}. Our proof technique is also similar with that in \cite{samanta2019spectrum}.

\begin{theorem}\label{SameSpec}
Let $A,B\in \mathcal{H}_{n}(\Gamma)$ have the same graph $G$. If $\Re_A(C)\leq \Re_B(C)$ for each cycle $C$ in $G$, then $A$ and $B$ are cospectral if and only if $\Re_A(C)= \Re_B(C)$ for each cycle $C$ in $G$.
\end{theorem}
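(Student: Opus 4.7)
The ``if'' direction is immediate from Theorem~\ref{CharPol}: if $\Re_A(C)=\Re_B(C)$ for every cycle $C$ of $G$, then every coefficient of the characteristic polynomial of $A$ equals that of $B$, and hence $A$ and $B$ are cospectral.

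For the converse, the plan is to proceed by strong induction on the cycle length $k$, aiming to show that $\Re_A(C)=\Re_B(C)$ for every cycle $C$ of length $k$ in $G$, with the inductive hypothesis being that this equality already holds for every cycle of length strictly less than $k$ (vacuously true for $k$ below the girth). Since $G_A=G_B=G$, the sets $\mathcal{E}_k(A)$ and $\mathcal{E}_k(B)$ coincide; I will call this common set $\mathcal{E}_k(G)$. The key structural observation is that each $H\in\mathcal{E}_k(G)$ falls into exactly one of two disjoint classes: either $H$ is a single cycle of length $k$, or every cycle-component of $H$ has length strictly less than $k$. This dichotomy is clean because a cycle-component of length $k$ inside an elementary subgraph of order $k$ exhausts all vertices, leaving no room for further components.

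Applying Theorem~\ref{CharPol} to both $A$ and $B$ and separating the sum for $a_k$ according to this dichotomy, the terms from the second class agree under the inductive hypothesis: edge-components contribute nothing to $\prod_{C\in\mathcal{C}(H)}\Re(C)$, and all cycle-components have length less than $k$, so their real gains are already pinned down. The first class contributes $-2\Re_A(C)$ (respectively $-2\Re_B(C)$) for each cycle $C$ of length $k$ in $G$. Cospectrality forces $a_k(A)=a_k(B)$, which reduces to
$$\sum_{C \,:\, |C|=k}\bigl(\Re_B(C)-\Re_A(C)\bigr)=0.$$
Each summand is non-negative by hypothesis, and the total is zero, so each summand vanishes, advancing the induction.

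The main obstacle, really a bookkeeping matter, is the clean partition of $\mathcal{E}_k(G)$: one must verify that a cycle-component of length $k$ inside an order-$k$ elementary subgraph must be the sole component, so that the length-$k$ cycle contribution is cleanly isolated from the data already pinned down by induction. Once this structural observation is in place, the remainder is a direct sign-and-coefficient check in Theorem~\ref{CharPol}, combined with the one-sided inequality and cospectrality.
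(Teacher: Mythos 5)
Your proposal is correct and follows essentially the same route as the paper: induct on cycle length, split $\mathcal{E}_k$ into the single length-$k$ cycles and the elementary subgraphs whose cycle-components are shorter, use equality of characteristic-polynomial coefficients from Theorem~\ref{CharPol} to isolate $-2\sum_{|C|=k}\Re(C)$, and let the one-sided inequality force termwise equality. The structural point you flag (a length-$k$ cycle in an order-$k$ elementary subgraph is its sole component) is exactly what underlies the paper's decomposition, so there is no substantive difference.
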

\begin{proof} Let $\Phi(A,x) = x^n + a_1x^{n-1} +\cdots + a_n$ and $\Phi(B,x) = x^n + b_1x^{n-1} +\cdots + b_n$ be the characteristic polynomials of $A$ and $B$, respectively. Assume that $\Re_A(C)\leq \Re_B(C)$ for each cycle $C$ in $G$, and that $\Sp(A)=\Sp(B)$. Then we have $a_k=b_k$ for each $k\in \{1,\ldots,n\}$. We apply induction on the length of cycles. Let $\mathcal{C}_k(G)$ be the set of all cycles of length $k$ in $G$. Note that $a_3=b_3$, and therefore we have  $-2 \sum_{C\in \mathcal{C}_{3}(G)} \Re_A(C)=-2 \sum_{C\in \mathcal{C}_{3}(G)} \Re_B(C)$. The given condition $\Re_A(C)\leq \Re_B(C)$ implies that $\Re_A(C)= \Re_B(C)$ for each $C\in  \mathcal{C}_{3}(G)$. Assume that the statement holds for each cycle of length less than or equal to $l$. By induction hypothesis, we have

\begin{equation}\label{eq1samespec}
\begin{split}
\sum_{H\in \mathcal{E}_{l+1}(A) \setminus \mathcal{C}_{l+1}(G)} (-1)^{k(H)} 2^{c(H)} \prod_{C \in \mathcal{C}(H)} \Re_A(C)= \sum_{H\in \mathcal{E}_{l+1}(B)\setminus \mathcal{C}_{l+1}(G)} (-1)^{k(H)} 2^{c(H)} \prod_{C \in \mathcal{C}(H)} \Re_B(C).
\end{split}
\end{equation}
Since $a_{l+1}=b_{l+1}$, we have 
\begin{equation}\label{eq2samespec}
\begin{split}
\sum_{H\in \mathcal{E}_{l+1}(A)} (-1)^{k(H)} 2^{c(H)} \prod_{C \in \mathcal{C}(H)} \Re_A(C)= \sum_{H\in \mathcal{E}_{l+1}(B)} (-1)^{k(H)} 2^{c(H)} \prod_{C \in \mathcal{C}(H)} \Re_B(C).
\end{split}
\end{equation}
From (\ref{eq1samespec}) and (\ref{eq2samespec}), we get $$-2 \sum_{C\in \mathcal{C}_{l+1}(G)} \Re_A(C)=-2 \sum_{C\in \mathcal{C}_{l+1}(G)} \Re_B(C).$$ 

Now $\Re_A(C)\leq \Re_B(C)$ implies that $\Re_A(C)= \Re_B(C)$ for each $C\in  \mathcal{C}_{l+1}(G)$. Hence by induction, $\Re_A(C)= \Re_B(C)$ for each cycle $C$ in $G$. The other part of the theorem is Theorem~\ref{CharPol}.
\end{proof}

\begin{corollary}\label{SameSpecCoro}
Let $A,B\in \mathcal{H}_{n}(\Gamma)$ have the same graph $G$. If $A$ is balanced, then $A$ and $B$ are cospectral if and only if $\Re_A(C)= \Re_B(C)$ for each cycle $C$ in $G$.
\end{corollary}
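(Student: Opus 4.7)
The plan is to derive this immediately from Theorem~\ref{SameSpec} by observing that, among all matrices in $\mathcal{H}_{n}(\Gamma)$ with a fixed graph $G$, a balanced matrix pointwise maximizes the real gain on every cycle. Concretely, I first want to record that $A$ being balanced means $\zeta_A(C) = 1$ for every cycle $C$ in $G_A^{\zeta}$, and therefore $\Re_A(C) = 1$ for every such cycle.

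Next, I would observe that since $\Gamma \setminus \{0\}$ is a subgroup of the unit circle, every nonzero entry of $B$ has modulus one. Hence, for any cycle $C$ in $G_B = G$, the gain $\zeta_B(C)$ is a product of unit modulus complex numbers, so $|\zeta_B(C)| = 1$ and in particular
\[
\Re_B(C) \;\leq\; |\zeta_B(C)| \;=\; 1 \;=\; \Re_A(C)
\]
for every cycle $C$ in $G$. This gives the pointwise inequality needed to feed into Theorem~\ref{SameSpec}, with the roles of $A$ and $B$ swapped from how they appear in that statement.

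Finally, I would invoke Theorem~\ref{SameSpec} applied to the pair $(B, A)$: since $\Re_B(C) \leq \Re_A(C)$ for every cycle $C$, the matrices $A$ and $B$ are cospectral if and only if $\Re_B(C) = \Re_A(C)$ for every cycle $C$, which is exactly the desired conclusion. The converse direction is Theorem~\ref{CharPol} and requires no use of balance.

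There is essentially no technical obstacle here; the entire content is the simple but crucial remark that a balanced matrix lies at the top of the pointwise order on cycle real gains within $\mathcal{H}_{n}(\Gamma)$, which turns Theorem~\ref{SameSpec} into an unconditional characterization of cospectrality in this case.
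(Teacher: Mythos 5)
Your proposal is correct and follows essentially the same route as the paper: balance gives $\Re_A(C)=1$, unit-modulus gains give $\Re_B(C)\leq 1=\Re_A(C)$, and Theorem~\ref{SameSpec} (with the roles of $A$ and $B$ swapped) yields the conclusion, the converse being Theorem~\ref{CharPol}. Your explicit remark about the swap of roles is a minor clarification of the paper's one-line argument, not a different proof.
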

\begin{proof} Since  $\Re_B(C)\leq 1 = \Re_A(C)$ for each cycle $C$ in $G$, by Theorem~\ref{SameSpec} we get the desired result. 
\end{proof}

Some characterizations of balance in gain graphs (Theorem 2.4 of ~\cite{germina2012balance}) and signed graphs~\cite{acharya1980spectral} are consequences of Corollary~\ref{SameSpecCoro}. In the next section, we introduce a similar characterization of balanced mixed graphs. The proof of the following result is similar to the proof of Theorem \ref{SameSpec}.

\begin{theorem}\label{SameSpec2}
Let $A,B\in \mathcal{H}_{n}(\Gamma)$ have the same graph $G$. Suppose that $\Re_A(C_1)=\Re_A(C_2)$ and $\Re_B(C_1)=\Re_B(C_2)$ for every pair of cycles $C_1$ and $C_2$ of the same length in $G$. Then $A$ and $B$ are cospectral if and only if $\Re_A(C)= \Re_B(C)$ for each cycle $C$ in $G$.
\end{theorem}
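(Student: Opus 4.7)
The plan is to mimic the proof of Theorem~\ref{SameSpec}, replacing the inequality hypothesis $\Re_A(C)\le\Re_B(C)$ by the given assumption that, within each of $A$ and $B$, the real gain of a cycle depends only on its length. Write $\Phi(A,x)=x^n+a_1x^{n-1}+\cdots+a_n$ and $\Phi(B,x)=x^n+b_1x^{n-1}+\cdots+b_n$, and set $\alpha_k:=\Re_A(C)$ and $\beta_k:=\Re_B(C)$ for any $C\in\mathcal{C}_k(G)$; these values are well defined by hypothesis.

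The reverse direction is immediate: if $\Re_A(C)=\Re_B(C)$ for every cycle $C$ in $G$, then Theorem~\ref{CharPol} forces $a_k=b_k$ for every $k$, hence $\Sp(A)=\Sp(B)$.

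For the forward direction, I would induct on cycle length. For the base case $k=3$, the only elementary subgraphs on three vertices are triangles, so Theorem~\ref{CharPol} gives $a_3=-2|\mathcal{C}_3(G)|\alpha_3$ and $b_3=-2|\mathcal{C}_3(G)|\beta_3$; cospectrality yields $a_3=b_3$, and if $\mathcal{C}_3(G)\neq\emptyset$ we obtain $\alpha_3=\beta_3$ (the empty case is vacuous). For the inductive step, assume $\alpha_j=\beta_j$ for all $j\le\ell$. The key combinatorial observation is that any $H\in\mathcal{E}_{\ell+1}(A)$ which is not a single cycle of length $\ell+1$ must have at least two components, each of size $\ge2$, and hence every cycle occurring inside such an $H$ has length at most $\ell-1$. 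By the induction hypothesis, together with the ``length-only'' assumption on real gains of $A$ and $B$, the contributions to $a_{\ell+1}$ and $b_{\ell+1}$ coming from these non-cycle elementary subgraphs coincide term-by-term. Subtracting these contributions from the identity $a_{\ell+1}=b_{\ell+1}$ and using Theorem~\ref{CharPol} leaves
$$-2\sum_{C\in\mathcal{C}_{\ell+1}(G)}\Re_A(C)=-2\sum_{C\in\mathcal{C}_{\ell+1}(G)}\Re_B(C),$$
equivalently $|\mathcal{C}_{\ell+1}(G)|\,\alpha_{\ell+1}=|\mathcal{C}_{\ell+1}(G)|\,\beta_{\ell+1}$, so $\alpha_{\ell+1}=\beta_{\ell+1}$ whenever $\mathcal{C}_{\ell+1}(G)\neq\emptyset$. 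Together with the length-only hypothesis, this gives $\Re_A(C)=\Re_B(C)$ for every cycle of length $\ell+1$, completing the induction.

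There is no serious obstacle here: the argument is a direct adaptation of Theorem~\ref{SameSpec}, with the length-only hypothesis replacing the inequality and thereby letting us read off $\alpha_{\ell+1}=\beta_{\ell+1}$ from a single scalar equation rather than from a sign-constrained sum. The only minor point requiring care is the vertex-counting observation that isolates the length-$(\ell+1)$ cycle contributions cleanly, and the handling of those $k$ for which $\mathcal{C}_k(G)=\emptyset$, where both the inductive hypothesis and the inductive step are vacuous.
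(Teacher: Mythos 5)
Your proposal is correct and is exactly the adaptation the paper intends: the paper gives no separate argument for Theorem~\ref{SameSpec2}, stating only that its proof is similar to that of Theorem~\ref{SameSpec}, and your induction on cycle length—using Theorem~\ref{CharPol}, isolating the length-$(\ell+1)$ cycle terms, and replacing the inequality step by the length-only hypothesis to deduce $\alpha_{\ell+1}=\beta_{\ell+1}$ from the single scalar identity—is precisely that argument, with the vacuous cases handled correctly.
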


\section{Switching equivalence on mixed graphs}
All definitions for complex unit gain graphs apply to mixed graphs with the gains in $\{1,-1,\mathbf{i},-\mathbf{i}\}$. Thus, the terms \emph{switching equivalence} of mixed graphs, \emph{gain} (respectively \emph{real gain}) of walks in a mixed graph, \emph{balanced} mixed graphs etc. have the same meaning as in the corresponding Hermitian adjacency matrices. A mixed graph $G^{\phi}$ is \textit{negative} (respectively \textit{imaginary}) if the \textit{gain} with respect to $G^{\phi}$ of each cycle in $G$ is $-1$ (respectively $\mathbf{i}$ or $-\mathbf{i}$).

Switching equivalence of the mixed graphs $G^{\phi}$ and $G^{\gamma}$ is denoted by $G^{\phi}\thicksim G^{\gamma}$. Switching equivalence is an equivalence relation on $\mathcal{M}(G)$. An equivalence class under this switching equivalence is the  \textit{switching equivalence class} of a mixed graph. The switching equivalence class containing the mixed graph  $ G^{\phi}$ is denoted by $[G^{\phi}]$.  The set of all switching equivalence classes of a mixed graph $G$ is denoted by $\Omega_{\mathcal{M}}(G)$. It is straight forward to see that if two mixed graphs $G^{\phi}$ and $G^{\gamma}$ are switching equivalent, then $G^{\phi}$ and $G^{\gamma}$ are cospectral. 

In $2020$,  Yi Wang and  Bo-Jun Yuan \cite{wang2020graphs} introduced a strong cycle basis for mixed graphs and proved that every graph admits a strong cycle basis. Using a strong cycle basis, they proved that two mixed graphs are switching equivalent if and only if the gains of each cycle in the strong cycle basis are same. Indeed, the cycle basis with respect to a maximal forest is always a strong cycle basis. Thus, the characterization of switching equivalent mixed graphs in \cite{wang2020graphs} is a special case of Theorem ~\ref{sweq}, where  $\Gamma= \{ 0,\pm 1,\pm \mathbf{i}\}$. We present this in the next result.

\begin{theorem}\label{sw01}
Let $\phi$ and $\gamma$ be two partial orientations of a graph $G$. If $F$ is a maximal forest of $G$, then $G^{\phi}$ and $G^{\gamma}$ are switching equivalent if and only if $\zeta_{G^{\phi}}(C)=\zeta_{G^{\gamma}}(C)$ for each cycle $C \in \mathcal{B}_F(G)$.
\end{theorem}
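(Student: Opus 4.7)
The plan is to recognize this theorem as a direct specialization of Theorem~\ref{sweq}, with essentially no new content beyond checking that the general framework of $\mathcal{H}_n(\Gamma)$ applies to Hermitian adjacency matrices of mixed graphs. First, I would verify that $\Gamma := \{0, 1, -1, \mathbf{i}, -\mathbf{i}\}$ meets the standing hypothesis on $\Gamma$ in this paper: clearly $0\in\Gamma$, and $\Gamma\setminus\{0\}=\{\pm 1,\pm\mathbf{i}\}$ is the cyclic subgroup of order $4$ of the unit circle in $\mathbb{C}$. Hence the whole theory of Section~2 applies with this choice of $\Gamma$.

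Next, I would note the matrix-theoretic translation. For any partial orientation $\phi$ of $G$, the Hermitian adjacency matrix $H(G^{\phi})$ is an $n\times n$ Hermitian matrix with entries in $\Gamma$ and zero diagonal, so $H(G^{\phi})\in\mathcal{H}_n(\Gamma)$, and similarly $H(G^{\gamma})\in\mathcal{H}_n(\Gamma)$. By the very definition of the underlying graph one has $G_{H(G^{\phi})}=G=G_{H(G^{\gamma})}$, so the two matrices have the same graph in the sense required by Theorem~\ref{sweq}. Moreover, by definition the gain $\zeta_{G^{\phi}}(C)$ of a cycle in the mixed graph agrees with the gain $\zeta_{H(G^{\phi})}(C)$ computed from the matrix entries, since the mixed-graph terminology (gain of walks, switching equivalence, etc.) is declared at the beginning of Section~3 to coincide with the gain-graph terminology when the gain set is $\{1,-1,\mathbf{i},-\mathbf{i}\}$.

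With this dictionary in place, the proof is just an invocation: switching equivalence $G^{\phi}\thicksim G^{\gamma}$ as mixed graphs is by definition switching equivalence $H(G^{\phi})\thicksim H(G^{\gamma})$ in $\mathcal{H}_n(\Gamma)$, and applying Theorem~\ref{sweq} to $A=H(G^{\phi})$, $B=H(G^{\gamma})$ and the maximal forest $F$ of $G$ yields precisely the stated equivalence with the condition $\zeta_{G^{\phi}}(C)=\zeta_{G^{\gamma}}(C)$ for every $C\in\mathcal{B}_F(G)$.

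There is no real obstacle; the only step worth being careful about is the verification that $\{\pm 1,\pm\mathbf{i}\}$ is closed under multiplication and inversion (so that it really is a subgroup), and the check that the cycle-gain computed from the matrix entries $\pm 1,\pm\mathbf{i}$ equals the cycle-gain defined directly from $\phi$. Both are routine and could even be compressed into a single sentence, so the final write-up can be essentially a three-line deduction from Theorem~\ref{sweq}.
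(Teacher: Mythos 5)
Your proposal is correct and matches the paper exactly: the paper offers no separate argument for this theorem, presenting it as the special case of Theorem~\ref{sweq} with $\Gamma=\{0,\pm 1,\pm\mathbf{i}\}$ applied to $A=H(G^{\phi})$ and $B=H(G^{\gamma})$, which is precisely your deduction. The routine checks you mention (that $\{\pm 1,\pm\mathbf{i}\}$ is a group and that mixed-graph gains coincide with matrix gains) are exactly what the paper implicitly relies on.
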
 

Using  Corollary~\ref{SameSpecCoro}, we now present a characterization of balanced mixed graphs.
\begin{theorem}\label{3.2} Let $\phi$ be a partial orientation of a graph $G$. Then $G^{\phi}$ is balanced if and only if $G$ and $G^{\phi}$ are cospectral.
\end{theorem}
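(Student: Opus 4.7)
The plan is to reduce this to Corollary~\ref{SameSpecCoro} applied to the standard adjacency matrix of $G$ (regarded as the Hermitian adjacency matrix of the trivial partial orientation) and to $H(G^\phi)$. Let $A \coloneqq H(G)$ and $B \coloneqq H(G^\phi)$. Both matrices belong to $\mathcal{H}_n(\Gamma)$ with $\Gamma = \{0,\pm 1, \pm \mathbf{i}\}$, and $G_A = G = G_B$, so Corollary~\ref{SameSpecCoro} is available provided $A$ is balanced. This is immediate: every entry of $A$ on an edge equals $1$, hence $\zeta_A(C) = 1$ for every cycle $C$ in $G$.

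Having verified the hypothesis, Corollary~\ref{SameSpecCoro} yields that $A$ and $B$ are cospectral if and only if $\Re_A(C) = \Re_B(C)$ for every cycle $C$ in $G$. Since $\Re_A(C) = 1$ identically, this reduces to the single requirement that $\Re_B(C) = 1$ for every cycle $C$.

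The final step is the arithmetic observation that links $\Re_B(C) = 1$ back to balance. Every entry of $B$ lies in $\{1, \mathbf{i}, -\mathbf{i}\}$, so for any cycle $C$ in $G$, the gain $\zeta_B(C)$ is a product of such entries and therefore belongs to the fourth roots of unity $\{1,-1,\mathbf{i},-\mathbf{i}\}$. Among these, the only one whose real part equals $1$ is $1$ itself. Hence $\Re_B(C) = 1$ is equivalent to $\zeta_B(C) = 1$, i.e., to $G^\phi$ being balanced, and combining this with the previous paragraph gives both directions of the theorem.

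There is essentially no obstacle once Corollary~\ref{SameSpecCoro} is in hand; the only subtlety is noticing that the gain group of a mixed graph is the group of fourth roots of unity, so that the real-part equality of Corollary~\ref{SameSpecCoro} upgrades for free to an equality of the gains themselves. Everything else is a direct identification of $G$ with its trivial partial orientation.
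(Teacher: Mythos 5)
Your proposal is correct and follows exactly the paper's own argument: take $A=H(G)$ (balanced since all edge entries are $1$), invoke Corollary~\ref{SameSpecCoro} to reduce cospectrality to $\Re_B(C)=1$ for all cycles, and then use that $\zeta_B(C)\in\{\pm 1,\pm\mathbf{i}\}$ to upgrade this to $\zeta_B(C)=1$. No differences of substance.
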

\begin{proof}
Let $A=H(G)$ and $B=H(G^{\phi})$. Note that $G$ is undirected, and so $A$ is always balanced. Now by Corollary~\ref{SameSpecCoro}, $A$ and $B$ are cospectral if and only if $\Re_A(C)= \Re_B(C)$ for each cycle $C$ in $G$. As $A$ is balanced, $\Re_{A}(C)=1$ for each cycle $C$ in $G$. Further, $\zeta_{B}(C)\in \{\pm 1, \pm \mathbf{i}\}$, and therefore $\Re_B(C)=1$ gives that $\zeta_{B}(C)=1$. Thus $A$ and $B$ are cospectral if and only if $\zeta_{B}(C)=1$ for each cycle $C$ in $G$, that is, $G$ and $G^{\phi}$ are cospectral if and only if $G^{\phi}$ is balanced.
\end{proof}
We note that Theorem~\ref{3.2} for gains in $\mathbb{R} \setminus \{0\}$ is Theorem 1 of \cite{acharya1980spectral}. Given a graph $G$ on $n$ vertices and $m$ edges, there are $3^m$ ways of constructing a mixed graph on $G$. We give a natural lower bound and upper bound of $|\Omega_{\mathcal{M}}(G)|$ in the next result.

\begin{theorem}\label{sw06}
Let $G$ be a graph of order $n$, size $m$ and $c$ components. Then there are at least  $3^{m-n+c}$ and at most $4^{m-n+c}$ distinct mixed graphs up to switching equivalence on $G$. Further, equality occurs in the upper bound if each fundamental cycle of $G$ with respect to a maximal forest $F$ has at least two edges that do not lie on other fundamental cycles with respect to $F$.
\end{theorem}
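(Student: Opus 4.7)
The plan is to fix a maximal forest $F$ of $G$ and set $r := m - n + c$, noting that $r = |E(G) \setminus E(F)| = |\mathcal{B}_F(G)|$, so that the non-tree edges index the fundamental cycles. By Theorem~\ref{sweq}, the switching class of any $G^\phi \in \mathcal{M}(G)$ is determined by the tuple $\bigl(\zeta_{G^\phi}(C)\bigr)_{C \in \mathcal{B}_F(G)}$. Each Hermitian-adjacency entry lies in $\{1, \mathbf{i}, -\mathbf{i}\}$, so every cycle gain lies in the four-element group $\{1, -1, \mathbf{i}, -\mathbf{i}\}$. This immediately yields the upper bound $|\Omega_{\mathcal{M}}(G)| \leq 4^r$.

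For the lower bound, I would use Lemma~\ref{1} to pass to a representative of each switching class in which every tree edge has gain $1$. For such a representative the gain of $C_F(e)$ is exactly the gain of the non-tree edge $e$ (possibly conjugated by the direction of traversal), so it ranges over $\{1, \mathbf{i}, -\mathbf{i}\}$. Independently assigning each of the $r$ non-tree edges a gain in $\{1, \mathbf{i}, -\mathbf{i}\}$ produces $3^r$ mixed graphs whose gain tuples on $\mathcal{B}_F(G)$ are pairwise distinct, hence $3^r$ pairwise non-equivalent switching classes by Theorem~\ref{sweq}.

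For the equality statement, I would enumerate the non-tree edges as $e_1, \ldots, e_r$. The hypothesis lets me pick, for each $i$, a tree edge $f_i \in E(C_F(e_i))$ that lies on no other $C_F(e_j)$; this automatically makes the $f_i$'s pairwise distinct, so the $2r$ edges $e_1, f_1, \ldots, e_r, f_r$ are all distinct. I would then consider mixed graphs in which every tree edge outside $\{f_1, \ldots, f_r\}$ is undirected and the pairs $(a_{e_i}, a_{f_i})$ range independently over $\{1, \mathbf{i}, -\mathbf{i}\}^2$. For such a graph $\zeta(C_F(e_i))$ depends only on $a_{e_i}$ and $a_{f_i}$, since every other tree edge of $C_F(e_i)$ contributes $1$; and the routine check $\{\alpha^{\epsilon_1} \beta^{\epsilon_2} : \alpha, \beta \in \{1, \mathbf{i}, -\mathbf{i}\},\ \epsilon_1, \epsilon_2 \in \{\pm 1\}\} = \{1, -1, \mathbf{i}, -\mathbf{i}\}$ shows that $\zeta(C_F(e_i))$ can be made equal to any element of $\{1, -1, \mathbf{i}, -\mathbf{i}\}$. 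Since neither $e_i$ nor $f_i$ lies on $C_F(e_j)$ for $j \neq i$, the cycle-gains decouple, and all $4^r$ tuples in $\{1, -1, \mathbf{i}, -\mathbf{i}\}^r$ are realized; combined with the upper bound this forces $|\Omega_{\mathcal{M}}(G)| = 4^r$.

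The main obstacle is the equality part, and the key subtlety there is decoupling the cycle gains. I expect the pairwise distinctness of the $f_i$ and the independence of the twists to fall out cleanly from the ``no other fundamental cycle'' hypothesis; once those are in hand, the product-image calculation and the application of Theorem~\ref{sweq} are straightforward.
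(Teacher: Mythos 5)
Your argument is correct and essentially the paper's own: leave the maximal forest undirected and vary the non-tree edges over $\{1,\mathbf{i},-\mathbf{i}\}$ for the lower bound, bound each fundamental-cycle gain by the four-element group for the upper bound, and use two edges private to each fundamental cycle to realize all four gains independently for the equality case. The only cosmetic remark is that the appeal to Lemma~\ref{1} in the lower bound is unnecessary (and, taken literally inside $\mathcal{M}(G)$, the tree-normalized representative need not itself be a mixed graph, since switching can create a $-1$ entry); your explicit construction of the $3^{m-n+c}$ mixed graphs with distinct gain tuples already does the job via Theorem~\ref{sweq}.
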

\begin{proof}
Let $F$ be a maximal forest of $G$. Consider each edge of $F$ to be undirected. Each $e\in E(G)\setminus E(F)$ lies on exactly one fundamental cycle $C_F(e)$. So we can get the gains $1,\pm \mathbf{i}$ on $C_F(e)$ by assigning a suitable direction on the edge $e$. We can do this with all fundamental cycles in $\mathcal{B}_F(G)$.  So there are three possible choices of gains on each fundamental cycle in $\mathcal{B}_F(G)$.  Hence we find at least $3^{m-n+c}$ distinct mixed graphs up to switching equivalence on $G$. Further, there are at most four gains, namely, $ \pm 1, \pm \mathbf{i}$, possible for a fundamental cycle. Hence there are at most $4^{m-n+c}$ distinct mixed graphs up to switching equivalence on $G$.

Let $e,f$ be two edges of a cycle $C$ in $\mathcal{B}_F(G)$ that do not lie on other fundamental cycles in $\mathcal{B}_F(G)$. It is clear that by assigning appropriate gains to $e$ and $f$, and leaving the edges in $C-\{e,f\}$ undirected, we can get the gain of $C$ to be any given number in $\{\pm 1, \pm \mathbf{i}\}$. Hence,  equality occurs in the upper bound in this case.
\end{proof}

A graph $G$ is a \emph{cactus} graph if every block of $G$ is a cycle or a single edge.

\begin{theorem}\label{sw07}
If $G^{\phi}$ is a negative or imaginary mixed graph, then each component of $G$ is a cactus graph.
\end{theorem}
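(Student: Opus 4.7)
The plan is to prove the contrapositive: if some component of $G$ is not a cactus, then $G^\phi$ is neither negative nor imaginary. I will produce three cycles whose gains satisfy a multiplicative relation incompatible with both the ``all gains $=-1$'' condition and the ``all gains $\in\{\mathbf{i},-\mathbf{i}\}$'' condition.

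The structural step is to locate a theta subgraph. If a component of $G$ is not a cactus, then it contains a block $B$ which is neither a single edge nor a single cycle. Such a $B$ is $2$-connected, so by the standard ear decomposition it can be built from some cycle $C_0$ by successively adding ears, and at least one ear is needed. The two endpoints $u,v$ of the first ear lie on $C_0$ and must be distinct, for otherwise $u=v$ would be a cut vertex of $B$. Combining this ear with the two $uv$-arcs of $C_0$ produces three internally disjoint $uv$-paths $P_1, P_2, P_3$ inside $B$, and hence three cycles $C_{12}, C_{13}, C_{23}$ with $C_{ij} = P_i \cup P_j$.

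Next I would carry out the gain computation. Let $\alpha_i$ denote the gain of $P_i$ oriented from $u$ to $v$, read from the entries of $H(G^\phi)$. Because every nonzero entry of $H(G^\phi)$ has unit modulus and $h_{yx}=\overline{h_{xy}}$, each $|\alpha_i|=1$ and the reverse orientation of $P_i$ contributes gain $\overline{\alpha_i}$. Consequently $\zeta(C_{ij})=\alpha_i\overline{\alpha_j}$, and the identity $\alpha_2\overline{\alpha_2}=1$ immediately yields the theta relation
\[
\zeta(C_{13}) \;=\; \alpha_1\overline{\alpha_2}\cdot\alpha_2\overline{\alpha_3} \;=\; \zeta(C_{12})\,\zeta(C_{23}).
\]
Plugging in the hypotheses: if $G^\phi$ is negative, then all three gains equal $-1$, forcing $1=(-1)(-1)=-1$, a contradiction; if $G^\phi$ is imaginary, each cycle gain lies in $\{\mathbf{i},-\mathbf{i}\}$, but the product of any two such numbers lies in $\{\pm 1\}$, contradicting $\zeta(C_{13})\in\{\mathbf{i},-\mathbf{i}\}$. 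Either way we reach a contradiction, so every component of $G$ must be a cactus.

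The main obstacle is recognizing that the correct reduction is to a theta subgraph rather than merely to ``two cycles sharing an edge'': the former gives a clean three-cycle multiplicative identity, whereas the latter forces one to track a symmetric-difference cycle whose gain relation is less transparent. The ear decomposition of a $2$-connected block delivers the theta subgraph for free, after which the algebra is routine thanks to the unit-modulus property of the entries of $H(G^\phi)$.
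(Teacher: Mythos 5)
Your proof is correct and follows essentially the same route as the paper: the paper also reduces to two cycles sharing a common path $P$ (i.e.\ a theta configuration) and proves the identity $\zeta_{G^{\phi}}(C)=\zeta_{G^{\phi}}(C_1)\zeta_{G^{\phi}}(C_2)$ for the symmetric-difference cycle, which is exactly your relation $\zeta(C_{13})=\zeta(C_{12})\zeta(C_{23})$, before deriving the same contradictions in the negative and imaginary cases. Your ear-decomposition step just makes the existence of the theta subgraph slightly more explicit than the paper's ``two cycles with a common edge'' phrasing, but the core argument is identical.
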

\begin{proof}
Suppose, for a contradiction, that $G$ is not a cactus graph. Then there are two cycles $C_1$ and $C_2$ in $G$ such that $E(C_1)\cap E(C_2)\neq \emptyset$. Let $P:= x_1\ldots x_k$, $C_1:= x_1v_1\ldots v_px_kx_{k-1}\ldots x_1$ and $C_2:= x_1\ldots x_ku_1\ldots u_qx_1$. Let $C:=x_1v_1\ldots v_px_ku_1\ldots u_qx_1$ satisfy the condition that $E(C)=E(C_1) \Delta E(C_2)$, where  $\Delta$ is the symmetric difference of sets. It is clear that $$\zeta_{G^{\phi}}(C)=\zeta_{G^{\phi}}(C_1)\zeta_{G^{\phi}}(P)\overline{\zeta_{G^{\phi}}(P)}\zeta_{G^{\phi}}(C_2)=\zeta_{G^{\phi}}(C_1)\zeta_{G^{\phi}}(C_2).$$
If $G^{\phi}$ is a negative mixed graph, then $\zeta_{G^{\phi}}(C_1) =\zeta_{G^{\phi}}(C_2)=-1$. This gives $\zeta_{G^{\phi}}(C)=1$, which is a contradiction. If $G^{\phi}$ is an imaginary mixed graph, then $\zeta_{G^{\phi}}(C_1), \zeta_{G^{\phi}}(C_2) \in \{ \mathbf{i},-\mathbf{i} \}$. This gives that  $\zeta_{G^{\phi}}(C)\in \{ 1, -1\}$, a contradiction again. Hence $G$ must be a a cactus graph.
\end{proof} 

Let $G^{\phi}$ and $D^{\gamma}$ be two mixed graphs. The cartesian product $G^{\phi} \square D^{\gamma}$ of $G^{\phi}$ and $D^{\gamma}$ is a mixed graph with vertex set $V (G) \times V (D)$ and there is an undirected edge (respectively directed edge) from  $(g_1,d_1)$ to $(g_2,d_2)$ if and only if $g_1 = g_2$ and $(d_1,d_2)$ is an undirected edge (respectively directed edge) in $D^{\gamma}$, or $d_1 = d_2$ and $(g_1, g_2)$ is an undirected edge (respectively directed edge) in $G^{\phi}$. One can see that  $$H(G^{\phi} \square D^{\gamma})=I_{|V(G)|} \otimes H(D^{\gamma})+H(G^{\phi}) \otimes I_{|V(D)|}.$$

\begin{theorem}
If $G^{\phi_1} \thicksim G^{\gamma_1}$ and $D^{\phi_2} \thicksim D^{\gamma_2}$, then $G^{\phi_1} \square D^{\phi_2} \thicksim G^{\gamma_1} \square D^{\gamma_2}$.
\end{theorem}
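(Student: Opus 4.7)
The plan is to exhibit an explicit switching matrix for the cartesian product as the Kronecker product of switching matrices for the two factors, and verify the conjugation identity through the given formula for $H(G^{\phi}\square D^{\gamma})$. By hypothesis, there exist diagonal matrices $D_1 = \text{diag}[\theta_1(1),\ldots,\theta_1(|V(G)|)]$ and $D_2 = \text{diag}[\theta_2(1),\ldots,\theta_2(|V(D)|)]$, with all diagonal entries in $\Gamma\setminus\{0\}$, such that $D_1^{-1} H(G^{\phi_1}) D_1 = H(G^{\gamma_1})$ and $D_2^{-1} H(D^{\phi_2}) D_2 = H(D^{\gamma_2})$.

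Set $D := D_1 \otimes D_2$. This is a diagonal matrix indexed by $V(G)\times V(D)$, whose $((u,v),(u,v))$-entry is $\theta_1(u)\theta_2(v)$. Since $\Gamma\setminus\{0\}$ is a multiplicative subgroup of the unit circle, each such product again lies in $\Gamma\setminus\{0\}$, so $D$ corresponds to a legitimate switching function on the vertex set of $G^{\phi_1}\square D^{\phi_2}$. Its inverse is $D^{-1} = D_1^{-1}\otimes D_2^{-1}$.

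Now I would invoke the identity stated just before the theorem,
\[
H(G^{\phi_1}\square D^{\phi_2}) \;=\; I_{|V(G)|}\otimes H(D^{\phi_2}) \;+\; H(G^{\phi_1})\otimes I_{|V(D)|},
\]
and conjugate both sides by $D$. Using the mixed-product property $(A\otimes B)(C\otimes E)=AC\otimes BE$ termwise,
\[
D^{-1}\bigl(I_{|V(G)|}\otimes H(D^{\phi_2})\bigr)D \;=\; (D_1^{-1}I_{|V(G)|}D_1)\otimes (D_2^{-1}H(D^{\phi_2})D_2) \;=\; I_{|V(G)|}\otimes H(D^{\gamma_2}),
\]
and analogously $D^{-1}(H(G^{\phi_1})\otimes I_{|V(D)|})D = H(G^{\gamma_1})\otimes I_{|V(D)|}$. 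Summing and applying the formula for the Hermitian adjacency matrix of a cartesian product in the opposite direction yields $D^{-1}H(G^{\phi_1}\square D^{\phi_2})D = H(G^{\gamma_1}\square D^{\gamma_2})$, which is exactly the assertion $G^{\phi_1}\square D^{\phi_2}\thicksim G^{\gamma_1}\square D^{\gamma_2}$.

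There is no substantive obstacle here; the proof is essentially bookkeeping with the Kronecker product. The only point that needs care is confirming that $D$ genuinely arises from a $\Gamma\setminus\{0\}$-valued switching function, which is where the subgroup hypothesis on $\Gamma\setminus\{0\}$ is used. Once that is observed, the calculation is a direct application of the mixed-product property.
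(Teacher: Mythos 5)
Your proposal is correct and matches the paper's proof: both take the Kronecker product $D(\theta_1)\otimes D(\theta_2)$ of the two switching matrices and conjugate the identity $H(G^{\phi_1}\square D^{\phi_2})=I_{|V(G)|}\otimes H(D^{\phi_2})+H(G^{\phi_1})\otimes I_{|V(D)|}$, the paper simply stating the resulting equality while you spell out the mixed-product computation. Your extra remark that $\theta_1(u)\theta_2(v)\in\Gamma\setminus\{0\}$ is a fine (and harmless) addition of detail.
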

\begin{proof}
Since $G^{\phi_1} \thicksim G^{\gamma_1}$ and $D^{\phi_2} \thicksim D^{\gamma_2}$, there exist diagonal matrices $D(\theta_1)$ and $D(\theta_2)$ such that $H(G^{\phi_1})=D(\theta_1)^{-1}H(G^{\gamma_1})D(\theta_1)$ and  $H(D^{\phi_2})=D(\theta_2)^{-1}H(D^{\gamma_2})D(\theta_2)$. Thus $$(D(\theta_1) \otimes D(\theta_2))^{-1}H(G^{\gamma_1} \square D^{\gamma_2})   (D(\theta_1) \otimes D(\theta_2))=  H(G^{\phi_1} \square D^{\phi_2}).  $$
Hence $G^{\phi_1} \square D^{\phi_2} \thicksim G^{\gamma_1} \square D^{\gamma_2}$.
\end{proof}


\section{Size of switching equivalence classes of mixed graphs}

Let $G^{\phi}$ be a mixed graph. Recall that $[G^{\phi}] =\{ G^{\gamma} \colon G^{\phi} \textnormal{ and  }G^{\gamma} \textnormal{ are switching equivalent}\}$.

\begin{theorem}\label{sw09}
Let $G$ be a graph and $S$ be the collection of all cut-edges of $G$. Then the size of each switching equivalence class of a mixed graph over $G$ is at least $3^{|S|}$. If $G$ is a forest, then the equality occurs.
\end{theorem}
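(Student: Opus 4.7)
The plan is to produce $3^{|S|}$ pairwise distinct mixed graphs inside the switching equivalence class $[G^{\phi}]$ by independently reassigning the orientation at each cut-edge of $G$. The driving observation is that no cut-edge lies on any cycle of $G$: if $e=uv$ were an edge of some cycle $C$, then $C-e$ would be a $uv$-path in $G-e$, contradicting the defining property of a cut-edge. Consequently, if $G^{\gamma}$ is obtained from $G^{\phi}$ by altering only the orientations at cut-edges, then for every cycle $C$ of $G$ the gain $\zeta_{G^{\phi}}(C)$ coincides with $\zeta_{G^{\gamma}}(C)$, because the product defining this gain involves only non-cut edges, on which $\phi$ and $\gamma$ agree.

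First I would apply Theorem~\ref{sweq} together with Corollary~\ref{weight} to conclude that any two mixed graphs over $G$ that agree on all non-cut edges are switching equivalent. Since each cut-edge admits three possible states (undirected, or directed in one of two ways), fixing the orientations on the remaining edges to agree with $\phi$ and varying the orientations on $S$ independently yields $3^{|S|}$ elements of $\mathcal{M}(G)$, each switching equivalent to $G^{\phi}$. These are pairwise distinct as mixed graphs because any two of them disagree on the orientation of at least one edge, which gives the lower bound $|[G^{\phi}]|\geq 3^{|S|}$.

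For the equality statement, suppose $G$ is a forest. Then every edge of $G$ is a cut-edge, so $|S|=|E(G)|=:m$, and the earlier corollary stating that two matrices in $\mathcal{H}_n(\Gamma)$ with the same forest as their graph are always switching equivalent implies that $\mathcal{M}(G)$ is a single switching equivalence class. Since $|\mathcal{M}(G)|=3^m$, this class has size $3^m=3^{|S|}$, so equality holds.

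I do not anticipate any real obstacle: the only non-trivial ingredient is the cycle-gain characterization of switching equivalence from Theorem~\ref{sweq}, and the distinctness of the $3^{|S|}$ constructed mixed graphs is immediate from the definition of $\mathcal{M}(G)$. The forest case then collapses to the tautology $|\mathcal{M}(G)|=3^m$ once all mixed graphs are identified as equivalent.
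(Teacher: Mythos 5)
Your proposal is correct and follows essentially the same route as the paper: cut-edges lie on no cycle, so by the fundamental-cycle characterization of switching equivalence one may freely reassign the three possible states at each edge of $S$ while staying in $[G^{\phi}]$, giving the lower bound $3^{|S|}$, and the forest case reduces to the fact that all $3^{m}$ mixed graphs over a forest form a single class. Your version merely spells out the distinctness of the constructed graphs and cites Theorem~\ref{sweq}/Corollary~\ref{weight} in place of the specialized Theorem~\ref{sw01}, which is an inessential difference.
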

\begin{proof}
Let $G^{\phi}$ be a mixed graph over $G$. By Theorem~\ref{sw01}, the switching equivalence relation depends on the gain of fundamental cycles. Further, the edges of $S$ do not belong to any cycle. So, changing the gains on the edges of $S$, the graph remains switching equivalent to $G^{\phi}$. Since there are only three gains possible on each edge, the size of $[G^{\phi}]$ is at least $3^{|S|}$.  If $G$ is a forest then $|E(G)|=|S|$ and $G^{\phi}$ is always balanced. Hence there is only one switching equivalence class, and its size is $3^{|S|}$.
\end{proof}

Let $\{ 1,\pm \mathbf{i}\}^n:=\{(z_1,\ldots,z_n)\in \mathbb{C}^n\colon  z_j\in \{  1,\pm \mathbf{i}\} \textnormal{ for each } j \}$. For $z\in \{ 1,\pm \mathbf{i}\}^n$, define $\zeta(z):=z_1\cdots z_n$, that is, $\zeta(z)$ is the product of the coordinates of $z$. Let $\alpha(0)= [ 1 ~~ 0 ~~ 0~~ 0 ]^t$. For $z\in \{ 1,\pm \mathbf{i}\}^n$ and $n\geq 1$, let $\alpha(n):= [ \alpha_1(n)~~ \alpha_{-1}(n) ~~ \alpha_{\mathbf{i}}(n)~~\alpha_{-\mathbf{i}}(n) ]^t$ be the vector in $\mathbb{R}^4$ such that $\alpha_x(n)$ is the number of $z$ in $\{ 1,\pm \mathbf{i}\}^n$ with $\zeta(z)=x$, where $x\in \{ \pm1,\pm \mathbf{i}\}$. Thus $\alpha(1)= [ 1 ~~ 0 ~~ 1~~ 1 ]^t$.  In general, $\alpha_x(n)=|[C_n^{\phi}]|$, where $x=\zeta_{C_n^{\phi}}(C_n)$. Let $I_n$ be the $n\times n$ identity matrix, and let $J_n$ be the $n\times n$ matrix with all entries $1$. Further, let 
$$L= \begin{bmatrix} I_2 & J_{2} \\ J_{2}& I_2 \end{bmatrix}.$$

\begin{lema}\label{sw21}
If $n\geq 1$, then $\alpha(n)=L \alpha(n-1)$.
\end{lema}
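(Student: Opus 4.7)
The plan is to establish the recurrence by conditioning on the last coordinate of a tuple $z = (z_1, \ldots, z_n) \in \{1, \pm \mathbf{i}\}^n$. Writing $w = (z_1, \ldots, z_{n-1}) \in \{1, \pm \mathbf{i}\}^{n-1}$, we have $\zeta(z) = z_n \cdot \zeta(w)$, so the fibre over any target value $x \in \{\pm 1, \pm \mathbf{i}\}$ splits into three disjoint parts according to whether $z_n = 1$, $z_n = \mathbf{i}$, or $z_n = -\mathbf{i}$. In each part, $\zeta(w)$ is forced to equal $x$, $-\mathbf{i}x$, or $\mathbf{i}x$, respectively. This yields the key identity
\[
\alpha_x(n) \;=\; \alpha_x(n-1) \;+\; \alpha_{-\mathbf{i}x}(n-1) \;+\; \alpha_{\mathbf{i}x}(n-1),
\]
valid for each $x \in \{\pm 1, \pm \mathbf{i}\}$ and $n \geq 1$.

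Next I would specialize this identity to the four choices of $x$ and compare with the rows of the matrix
\[
L = \begin{bmatrix} I_2 & J_2 \\ J_2 & I_2 \end{bmatrix} = \begin{bmatrix} 1 & 0 & 1 & 1 \\ 0 & 1 & 1 & 1 \\ 1 & 1 & 1 & 0 \\ 1 & 1 & 0 & 1 \end{bmatrix}.
\]
For $x = 1$, the multipliers $1,\, \mathbf{i},\, -\mathbf{i}$ give the indices $1,\, -\mathbf{i},\, \mathbf{i}$, which matches the row $[1,0,1,1]$. For $x = -1$ one obtains indices $-1,\, \mathbf{i},\, -\mathbf{i}$, matching $[0,1,1,1]$. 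For $x = \mathbf{i}$ one gets $\mathbf{i},\, 1,\, -1$, matching $[1,1,1,0]$. For $x = -\mathbf{i}$ one gets $-\mathbf{i},\, -1,\, 1$, matching $[1,1,0,1]$. Thus the recurrence above is precisely $\alpha(n) = L\,\alpha(n-1)$.

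The only remaining point is the base case $n = 1$, where the recurrence reads $\alpha(1) = L\alpha(0) = L[1,0,0,0]^t = [1,0,1,1]^t$, which agrees with the definition of $\alpha(1)$ given in the text. No obstacle is anticipated: the argument is a direct case analysis, and the main (minor) care required is to check that the three translated indices $x, -\mathbf{i}x, \mathbf{i}x$ are correctly read off from the multiplication table of $\{1, \pm \mathbf{i}\}$ when writing each row of $L$.
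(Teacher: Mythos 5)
Your proof is correct and follows essentially the same route as the paper: both condition on the last coordinate $z_n\in\{1,\mathbf{i},-\mathbf{i}\}$ of a tuple and read off which value $\zeta$ of the truncated tuple is forced, yielding the row-by-row identity $\alpha_x(n)=\alpha_x(n-1)+\alpha_{-\mathbf{i}x}(n-1)+\alpha_{\mathbf{i}x}(n-1)$ that matches the rows of $L$. The only cosmetic difference is that the paper dresses the argument as an induction and details only the row $x=1$, whereas you treat all four values of $x$ uniformly and directly, which is if anything slightly cleaner.
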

\begin{proof}
Use induction on $n$. As $\alpha(1)= [ 1 ~~ 0 ~~ 1~~ 1 ]^t$, we have $\alpha(1)=L \alpha(0)$. Assume that the statement is true for $n\leq k$. Now take $n=k+1$.

\noindent Claim: $\alpha_1(k+1)=\alpha_1(k)+\alpha_{\mathbf{i}}(k)+\alpha_{-\mathbf{i}}(k)$, that is, $\alpha_1(k+1) = [ 1 ~~ 0 ~~ 1~~ 1 ] \alpha(k)$. Each  $z\in \{ 1,\pm \mathbf{i}\}^{k+1}$ can be written as $z=(\tilde{z},z_{k+1})$, where $\tilde{z} \in \{ 1,\pm \mathbf{i}\}^{k}$ and $z_{k+1}\in \{ 1,\pm \mathbf{i}\}$. If $\zeta(z)=1$, then 
$$z_{k+1}=1 \Rightarrow \zeta(\tilde{z})=1,~~z_{k+1}=\mathbf{i} \Rightarrow \zeta(\tilde{z})=-\mathbf{i} ~\text{ and }~ z_{k+1}=-\mathbf{i} \Rightarrow \zeta({\tilde{z}})=\mathbf{i}.$$
Hence $\alpha_1(k+1)=\alpha_1(k)+\alpha_{\mathbf{i}}(k)+\alpha_{-\mathbf{i}}(k)$, that is, $\alpha_{1}(k+1) = [ 1 ~~ 0 ~~ 1~~ 1 ] \alpha(k)$. Similarly, one can show that $\alpha_{-1}(k+1) = [ 0 ~~ 1 ~~ 1~~ 1 ] \alpha(k)$, $\alpha_{\mathbf{i}}(k+1) = [ 1 ~~ 1 ~~ 1~~ 0 ] \alpha(k)$  and $\alpha_{-\mathbf{i}}(k+1) = [ 1 ~~ 1 ~~ 0~~ 1 ] \alpha(k)$. Thus $\alpha(k+1)=L \alpha(k)$, and the proof follows by induction.
\end{proof}

\begin{lema}\label{sw22}
If  $Q=J_4-L$, then
$$L^{n}= \left\{ \begin{array}{rcl}
\frac{1}{4}(3^{n}+1)J_4-Q 
& \textnormal{if $n$ is odd} \\ \frac{1}{4}(3^{n}-1)J_4+I_4  &  \textnormal{if $n$ is even.}
\end{array}\right. $$
\end{lema}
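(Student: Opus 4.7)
The plan is to exploit the simple multiplicative structure of the three-dimensional subalgebra of $4\times 4$ matrices generated by $I_4$, $J_4$, and $Q$. I will first record the identities that make this subalgebra close under multiplication: $J_4^2=4J_4$; $Q^2=I_4$ (since $Q=\operatorname{diag}(P,P)$ with $P=J_2-I_2$ a $2\times 2$ transposition); and $J_4 Q=QJ_4=J_4$ (since every row of $Q$ sums to $1$). The definition of $Q$ gives $L=J_4-Q$ directly. These identities let me multiply any expression of the form $\alpha J_4+\beta I_4+\gamma Q$ by $L$ and read off the product in the same shape.

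Next I will dispose of the base cases $n=1$ and $n=2$. For $n=1$ the claimed formula reads $L=\tfrac14(3+1)J_4-Q=J_4-Q$, which is immediate. For $n=2$, expanding and using the identities above,
\[
L^2=(J_4-Q)^2=J_4^2-J_4Q-QJ_4+Q^2=4J_4-2J_4+I_4=2J_4+I_4=\tfrac14(9-1)J_4+I_4,
\]
which matches the even case. This also gives the convenient closed form $L^2=2J_4+I_4$ that I will reuse.

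I then induct in strides of $2$. Assuming the formula for some odd $n$, I compute $L^{n+2}=L^n\cdot L^2=\bigl[\tfrac14(3^n+1)J_4-Q\bigr](2J_4+I_4)$ and collapse it using $J_4^2=4J_4$ and $QJ_4=J_4$; the coefficient of $Q$ survives unchanged as $-1$, while the coefficient of $J_4$ becomes
\[
2(3^n+1)+\tfrac14(3^n+1)-2=\tfrac14\bigl(9\cdot 3^n+9-8\bigr)=\tfrac14\bigl(3^{n+2}+1\bigr),
\]
which is the claim for $n+2$. The even case is an entirely analogous bookkeeping computation starting from $\tfrac14(3^n-1)J_4+I_4$, where $I_4\cdot L^2$ contributes the $+I_4$ term in $L^{n+2}$ and $J_4$ absorbs everything else.

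The argument is essentially routine; there is no serious obstacle. The one mildly delicate point—and the reason the parity splits the formula so cleanly—is that $QJ_4=J_4$ rather than something involving $Q$, so multiplying by $L^2=2J_4+I_4$ preserves the $Q$-coefficient across each stride of $2$ while rescaling the $J_4$-coefficient by essentially $9$. This is what makes the recursion close within the forms $\alpha J_4-Q$ (odd $n$) and $\alpha J_4+I_4$ (even $n$) and yields the $3^n$ growth.
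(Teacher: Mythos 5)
Your proof is correct. It rests on exactly the same structural facts the paper uses---$L=J_4-Q$, $J_4^2=4J_4$, $Q^2=I_4$, and $J_4Q=QJ_4=J_4$ (all of which you verify correctly, including the observation $Q=\operatorname{diag}(J_2-I_2,\,J_2-I_2)$)---but the execution differs: the paper expands $(J_4-Q)^n$ in one shot via the binomial theorem, using commutativity of $J_4$ and $Q$, and evaluates $\sum_{j=0}^{n-1}\binom{n}{j}4^{n-j}(-1)^j=3^n-(-1)^n$ to land directly on $\frac14\bigl(3^n-(-1)^n\bigr)J_4+(-1)^nQ^n$, splitting by parity only at the very end through $Q^n\in\{I_4,Q\}$. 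You instead verify the two base cases $n=1,2$ and induct in strides of two, multiplying by the closed form $L^2=2J_4+I_4$; your coefficient bookkeeping $2(3^n+1)+\frac14(3^n+1)-2=\frac14(3^{n+2}+1)$ is right, and the even-case step you leave as ``analogous'' does check out ($2(3^n-1)+\frac14(3^n-1)+2=\frac14(3^{n+2}-1)$, with the $I_4\cdot I_4$ product supplying the $+I_4$). Your route avoids summing a binomial series and makes transparent why the parity split is stable (multiplication by $L^2$ fixes the $Q$- and $I_4$-parts while scaling the $J_4$-coefficient by essentially $9$), at the cost of a two-case induction; the paper's route is shorter once one is willing to evaluate the alternating binomial sum, and it produces both parities simultaneously from a single formula.
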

\begin{proof}
Since $L=J_4-Q$, we have
\begin{equation*}
\begin{split}
L^{n}&= (J_4-Q)^{n}\\
&= \sum_{j=0}^{n}    \binom{n}{j}  J_{4}^{n-j} (-1)^j Q^j , \textnormal{ as $J_4$ and $Q$ commute}\\
&=  \sum_{j=0}^{n-1}    \binom{n}{j}  4^{n-1-j}  (-1)^j J_4 Q^j + (-1)^{n} Q^{n}, \textnormal{ as } J_4^{n-j}=4^{n-1-j} J_4\textnormal{ for all } j\\
&=\frac{1}{4}\bigg( \sum_{j=0}^{n-1}    \binom{n}{j}  4^{n-j} (-1)^j \bigg) J_4+ (-1)^{n} Q^{n}, \textnormal{ as } J_4Q^j=J_4\\
&=\frac{1}{4}\bigg( 3^{n} -(-1)^{n} \bigg) J_4+ (-1)^{n} Q^{n}.
\end{split}
\end{equation*}
Now using the facts that  if $n$ is even then $Q^{n}=I$, and if $n$ is odd then $Q^{n}=Q$, we get the desired conclusion.
\end{proof}

\begin{theorem} If $n\geq 3$ and $C_n^{\phi}$ is a mixed cycle of order $n$, then
\[|[C_n^{\phi}]| = \left\{ \begin{array}{ll}
\frac{3^n+1}{4} & \textnormal{if $n$ is odd and }\zeta_{C_n^{\phi}}(C_n)\in \{1,\pm\mathbf{i}\}\\
\frac{3^n-3}{4} & \textnormal{if $n$ is odd and }\zeta_{C_n^{\phi}}(C_n)=-1\\
\frac{3^n+3}{4} & \textnormal{if $n$ is even and }\zeta_{C_n^{\phi}}(C_n)=1\\
\frac{3^n-1}{4} & \textnormal{if $n$ is even and }\zeta_{C_n^{\phi}}(C_n)\in \{-1,\pm\mathbf{i}\}.
\end{array}\right.\]
\end{theorem}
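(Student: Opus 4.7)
The plan is to read the claim directly off Lemmas~\ref{sw21} and~\ref{sw22}, which together pin down the vector $\alpha(n)$. By the definition introduced just before Lemma~\ref{sw21}, we have $|[C_n^{\phi}]|=\alpha_x(n)$ with $x=\zeta_{C_n^{\phi}}(C_n)$; indeed, by Theorem~\ref{sw01} two mixed cycles over $C_n$ are switching equivalent if and only if they share the cycle gain, and the $3^n$ choices of edge-gains in $\{1,\pm\mathbf{i}\}^n$ are bijectively indexed by the mixed graphs on $C_n$. Iterating Lemma~\ref{sw21} starting from $\alpha(0)=[1,0,0,0]^t$, the single displayed formula $\alpha(n)=L^{n}\alpha(0)$ becomes the workhorse of the proof.

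Next I would split on the parity of $n$ and substitute the closed form for $L^{n}$ from Lemma~\ref{sw22}. For odd $n$,
\[
\alpha(n)=\tfrac{3^{n}+1}{4}J_4\alpha(0)-Q\alpha(0),
\]
and since $J_4\alpha(0)=[1,1,1,1]^t$, the only thing to compute is $Q\alpha(0)$, which is the first column of $Q=J_4-L$. Reading off the first column of $L$ as $[1,0,1,1]^t$, this column of $Q$ is $[0,1,0,0]^t$, and therefore
\[
\alpha(n)=\left[\tfrac{3^{n}+1}{4},\ \tfrac{3^{n}-3}{4},\ \tfrac{3^{n}+1}{4},\ \tfrac{3^{n}+1}{4}\right]^{t}.
\]
The coordinates are indexed (in order) by the gain values $1,-1,\mathbf{i},-\mathbf{i}$, so this matches the first two rows of the theorem.

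For even $n$, the analogous substitution yields
\[
\alpha(n)=\tfrac{3^{n}-1}{4}J_4\alpha(0)+I_4\alpha(0)=\left[\tfrac{3^{n}+3}{4},\ \tfrac{3^{n}-1}{4},\ \tfrac{3^{n}-1}{4},\ \tfrac{3^{n}-1}{4}\right]^{t},
\]
which gives the last two rows of the theorem. I do not expect any genuine obstacle: Lemma~\ref{sw22} has already done the heavy algebraic work of diagonalising $L$ implicitly, so the remaining task is bookkeeping, namely evaluating $J_4\alpha(0)$, $Q\alpha(0)$, and $I_4\alpha(0)$ and matching the four coordinates of $\alpha(n)$ to the corresponding values of $\zeta_{C_n^{\phi}}(C_n)$.
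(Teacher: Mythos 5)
Your proposal is correct and follows essentially the same route as the paper: both identify $|[C_n^{\phi}]|$ with $\alpha_x(n)$, iterate Lemma~\ref{sw21} to get $\alpha(n)=L^{n}\alpha(0)$, and read off the first column of $L^{n}$ from the parity-split closed form in Lemma~\ref{sw22}. Your explicit evaluation of $J_4\alpha(0)$ and $Q\alpha(0)$ is just the paper's "first column of $L^n$" computation spelled out.
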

\begin{proof}
From the definition of $\alpha_x(n)$, we know that $|[C_n^{\phi}]|=\alpha_x(n)$, where $x=\zeta_{C_n^{\phi}}(C_n)\in \{\pm 1,\pm \mathbf{i}\}$. Lemma ~\ref{sw21} gives that $\alpha(n)=L^{n}\alpha(0)$, where $\alpha(0)=[ 1 ~~ 0 ~~ 0~~ 0 ]^t$. Therefore
\begin{equation*}
\begin{split}
[ \alpha_1(n)~ \alpha_{-1}(n) ~ \alpha_{\mathbf{i}}(n)~\alpha_{-\mathbf{i}}(n) ]^t=\text{the first column of }L^n =  \left\{\begin{array}{rcl}
\left[ \frac{3^n+1}{4} ~~ \frac{3^n-3}{4} ~~ \frac{3^n+1}{4}~~ \frac{3^n+1}{4} \right]^t&  \textnormal{if $n$ is odd,}\\
\left[ \frac{3^n+3}{4} ~~  \frac{3^n-1}{4}  ~~ \frac{3^n-1}{4}~~\frac{3^n-1}{4} \right]^t
& \textnormal{if $n$ is even.}
\end{array}\right.
\end{split}
\end{equation*} 
Thus the result follows.
\end{proof}

Let $R$ be a finite subset of $\Gamma\setminus \{0\}$. A \textit{restricted gain graph} $G$ with gain set $R$, denoted $G^R$, is a gain graph in which the gain of each edge belongs to the set $R$. Switching of a restricted gain graph is done by the elements in $\Gamma\setminus \{0\}$, with the added condition that the switched gain graph is also restricted in $R$. Note that mixed graphs with the Hermitian adjacency matrix are restricted gain graphs, where $R = \{1, \mathbf{i}, -\mathbf{i}\}$ in the group $\{1, -1, \mathbf{i}, -\mathbf{i}\}$. It is clear that the switching of restricted gain graphs is also an equivalence relation. The equivalence class, under this equivalence relation, containing the restricted gain graph $G^R$ is denoted by $[G^R]$. 

\begin{rem}\label{r1} \normalfont
By Theorem~\ref{sweq}, two restricted gain graphs, with the same underlying graph and same gain set, are switching equivalent if and only if they have the same gain on each cycle of a fundamental cycle basis of the underlying graph.  
\end{rem}

\begin{theorem}\label{size1}
Let $G^R$ be a restricted gain graph with a finite gain set $R$ and underlying graph $G$. If $G_1,\ldots, G_k$ are the blocks of $G$, then 
\[|[G^R]|=\prod_{j=1}^{k}|[G_j^R]|,\]
where $G_j^R$ is the restricted gain graph with underlying graph $G_j$ in which gains are the same as that in $G^R$ for each $j\in \{1,\ldots,k\}$.
\end{theorem}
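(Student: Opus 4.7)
The plan is to construct an explicit bijection between $[G^R]$ and the Cartesian product $\prod_{j=1}^{k} [G_j^R]$ via restriction to blocks, and then read off the size equality. The two structural facts that make this work are: every edge of $G$ lies in exactly one block, and every cycle of $G$ is entirely contained in some block. Using these, I would select a maximal forest $F$ of $G$ with the property that $F_j := F \cap G_j$ is a maximal forest of each block $G_j$, so that the fundamental cycle basis decomposes as
\[\mathcal{B}_F(G) \;=\; \bigsqcup_{j=1}^{k} \mathcal{B}_{F_j}(G_j),\]
with every fundamental cycle of $G$ sitting inside a unique block.

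With this setup in place, I would define the restriction map $\Phi \colon [G^R] \to \prod_{j=1}^{k}[G_j^R]$ by $\Phi(H) = (H|_{G_1}, \ldots, H|_{G_k})$, where $H|_{G_j}$ retains only the gains on edges of $G_j$. Well-definedness follows immediately from Remark~\ref{r1}: if $H \sim G^R$, then $H$ and $G^R$ share the gain of every cycle in $\mathcal{B}_F(G)$, hence of every cycle in each $\mathcal{B}_{F_j}(G_j)$, and so $H|_{G_j} \sim G_j^R$ in the restricted gain setting on $G_j$. Injectivity is equally clean, since the edge sets of the blocks partition $E(G)$, so $H$ is uniquely recovered from the tuple of its block-restrictions.

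The substantive step will be surjectivity. Given $(H_1,\ldots,H_k)$ with $H_j \in [G_j^R]$, I would glue the $H_j$ into a single restricted gain graph $H$ on $G$ by letting each edge $e \in E(G)$ take its gain from the unique $H_j$ containing it. To certify $H \in [G^R]$ via Remark~\ref{r1}, I would pick an arbitrary $C \in \mathcal{B}_F(G)$, locate the unique block $G_j$ with $C \in \mathcal{B}_{F_j}(G_j)$, and chase the equalities $\zeta_H(C) = \zeta_{H_j}(C) = \zeta_{G_j^R}(C) = \zeta_{G^R}(C)$, where the middle equality uses $H_j \sim G_j^R$ and the outer two are just the definition of the gluing. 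By construction $\Phi(H) = (H_1,\ldots,H_k)$, so $\Phi$ is a bijection and the size equality follows.

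The one point requiring care is the choice of maximal forest $F$ that restricts to a maximal forest of each block; I expect this to be the main, though mild, obstacle. It is a standard fact that a spanning tree of a connected graph induces a spanning tree on every block (since every cycle is confined to a block, the number of forest edges within $G_j$ must be exactly $|V(G_j)|-1$), and I would invoke this without detailed proof, extending to the disconnected case componentwise. Once $F$ is fixed, the rest of the argument is purely bookkeeping with Remark~\ref{r1}.
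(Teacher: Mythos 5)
Your proposal is correct and follows essentially the same route as the paper: both rest on choosing a maximal forest $F$ with $F_j=F\cap G_j$ a maximal forest of each block, so that $\mathcal{B}_F(G)$ decomposes as the disjoint union of the $\mathcal{B}_{F_j}(G_j)$, and then invoking Remark~\ref{r1} to reduce switching equivalence to blockwise cycle gains. The only difference is presentational: you package the count as an explicit restriction/gluing bijection onto $\prod_j [G_j^R]$, whereas the paper states the blockwise equivalence criterion and reads off the product directly.
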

\begin{proof} Let $\tilde{G}^R$ be another restricted gain graph with the finite gain set $R$ and underlying graph $G$. Also, let $\tilde{G}_j^R$ be the restricted gain graph with underlying graph $G_j$ in which gains are the same as that in $\tilde{G}^R$ for each $j\in \{1,\ldots,k\}$.

Let $F$ be a spanning subgraph of $G$ and $F_j=F\cap G_j$ for each $j\in \{1,\ldots,k\}$. Clearly, $F$ is a maximal forest of $G$ if and only if $F_j$ is a spanning tree of $G_j$ for each $j\in \{1,\ldots,k\}$. Thus $\mathcal{B}_F(G)=\cup_{j=1}^{k}\mathcal{B}_{F_j}(G_j)$, a disjoint union. This fact, along with Remark~\ref{r1}, gives that $G^R$ and $\tilde{G}^R$ are switching equivalent if and only if $G^R_j$ and $\tilde{G}^R_j$ are switching equivalent for each $j\in \{1,\ldots,k\}$. Hence 
\[  |[G^R]|=\prod_{j=1}^{k}|[G_j^R]|. \qedhere \]
\end{proof}

\begin{corollary}
Let $G^\phi$ be a mixed graph such that $G$ is a cactus graph. If $\{ C_{n_1},\ldots,C_{n_k}\} $ is the set of cycles in $G$, then 
$$|[G^{\phi}]|=3^{|S|} \prod_{j=1}^{k} \alpha_{x_j}(n_j),$$ 
where $S$ is the set of all cut-edges of $G$ and 
$x_j=\zeta_{G^{\phi}}(C_{n_j})$ for each $j\in \{1,\ldots,k\}$.
\end{corollary}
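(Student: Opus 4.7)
The plan is to combine Theorem \ref{size1} with the block structure of a cactus graph and the previously established formula for the size of the switching class of a mixed cycle. Since $G$ is a cactus graph, every block of $G$ is either a cycle or a single edge. Moreover, the single-edge blocks are exactly the cut-edges of $G$, so there are precisely $|S|$ single-edge blocks, and the remaining blocks are the cycles $C_{n_1},\ldots,C_{n_k}$.

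First I would invoke Theorem \ref{size1} (applied to the restricted gain graph $G^{\phi}$ with gain set $R=\{1,\mathbf{i},-\mathbf{i}\}$) to get
\[
|[G^{\phi}]|=\prod_{B}|[B^{\phi}]|,
\]
where the product ranges over the blocks $B$ of $G$ and $B^{\phi}$ denotes the restricted gain graph on $B$ inheriting its gains from $G^{\phi}$.

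Next, I would evaluate each factor. For a single-edge block $B$ (i.e.\ a cut-edge $e\in S$), the underlying graph has no cycles, so by Theorem \ref{sweq} any two restricted gain graphs on $B$ with gain set $\{1,\mathbf{i},-\mathbf{i}\}$ are switching equivalent; hence $|[B^{\phi}]|=3$, the number of available gains on that single edge. (This is also consistent with Theorem \ref{sw09} applied to the forest $B$.) For a cycle block $B=C_{n_j}$, the preceding theorem on mixed cycles (together with the remark immediately after Lemma \ref{sw22} identifying $|[C_n^{\phi}]|$ with $\alpha_x(n)$ where $x=\zeta_{C_n^{\phi}}(C_n)$) gives $|[C_{n_j}^{\phi}]|=\alpha_{x_j}(n_j)$, with $x_j=\zeta_{G^{\phi}}(C_{n_j})$, since the gain of the cycle in $G^{\phi}$ agrees with its gain as a standalone block.

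Multiplying these contributions yields
\[
|[G^{\phi}]|=3^{|S|}\prod_{j=1}^{k}\alpha_{x_j}(n_j),
\]
as required. There is no real obstacle here beyond bookkeeping; the only point requiring a brief justification is that the block decomposition of a cactus is exactly into cycles and cut-edges, and that Theorem \ref{size1} applies to the whole graph and not just its biconnected components (which it does, since a maximal forest of $G$ restricts to a spanning tree in each block, making the fundamental cycles decompose blockwise).
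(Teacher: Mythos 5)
Your proposal is correct and follows essentially the same route as the paper: apply Theorem~\ref{size1} to the block decomposition of the cactus, note that each cut-edge block contributes a factor of $3$ and each cycle block $C_{n_j}$ contributes $\alpha_{x_j}(n_j)$, and multiply. Your extra remarks on why single-edge blocks are exactly the cut-edges and why the fundamental cycles decompose blockwise are fine but already implicit in the paper's argument.
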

\begin{proof} 
Note that the blocks of $G$ are the edges of $S$ and the cycles $C_{n_1},\ldots,C_{n_k}$. If $x_j=\zeta_{G^\phi}(C_{n_j})$, then by definition, $|[C_{n_j}^{\phi}]|=\alpha_{x_j}(n_j)$ for each $j\in \{1,\ldots,k\}$. Also, if the complete graph $K_2$ represents an edge in $S$, then $|[K_2^{\phi}]|=3$. Thus by Theorem~\ref{size1}, we have
\[ |[G^{\phi}]|=3^{|S|} \prod_{j=1}^{k} \alpha_{x_j}(n_j). \qedhere \]
\end{proof}

Now our aim is to calculate the size $|[G^{\phi}]|$ of a mixed plane graph $G^{\phi}$. Due to Theorem~\ref{size1}, it is enough to consider $G$ to be $2$-connected. Let $G$ be a $2$-connected plane graph, and let $f$ be an inner (bounded) face of $G$. The boundary of $f$ may be regarded as a subgraph. We use the notation $\partial (f)$ to denote the edge set of this subgraph. A cycle $C$ in a plane graph $G$ is said to be a \textit{face cycle} if $E(C)= \partial (f)$ for some inner face $f$. Note that a face cycle is defined only for the inner faces of a plane graph. It is known that the face cycles of a $2$-connected plane graph form a basis of the cycle space of the graph. See \cite{diestel} for details. 

Let $G$ be a $2$-connected mixed plane graph and $C_{n_1},\ldots,C_{n_k}$ be the distinct face cycles in $G$, each considered in clockwise direction. Let  $n_r$ be the order of $C_{n_r}$ for each $r\in \{1,\ldots,k\}$.  For $p\neq q$, define $E_{pq}=E(C_{n_p})\cap E(C_{n_q}) $ and $E_{p/q}=E(C_{n_p})\setminus E(C_{n_q}) $. Also, let $E_{pp}$ be the set of all edges that lie only on the cycle $C_{n_p}$. Note that $E_{pp}$ does not contain edges of $C_{n_q}$ for $q\neq p$. It is clear that $\bigcup_{p,q=1, p\leq q}^{k}E_{pq}$ is a disjoint union of all the edges of $G$.  Let $n_{pq} = |E_{pq}|$ for $p,q\in \{1,\ldots,k\}$.

Let $\phi$ be a partial orientation on $G$. Let $\zeta_{G^{\phi}}(E_{pq})$ be the product of gains, with respect to $G^{\phi}$, of the edges in $E_{pq}$ according to the clockwise direction of $C_{n_p}$. Similarly, $\zeta_{G^{\phi}}(E_{p/q})$ is also defined. Note that $\overline{\zeta_{G^{\phi}}(E_{pq})}=\zeta_{G^{\phi}}(E_{qp})$. If $C_{n_p} \Delta C_{n_q}$ is a cycle, then we have
\begin{align}\label{symdif}
\zeta_{G^{\phi}}(C_{n_p} \Delta C_{n_q}) = &\zeta_{G^{\phi}}(E_{p/q})\zeta_{G^{\phi}}(E_{q/p})\\ \nonumber
=& \zeta_{G^{\phi}}(E_{p/q})\zeta_{G^{\phi}}(E_{pq})\zeta_{G^{\phi}}(E_{qp}) \zeta_{G^{\phi}}(E_{q/p})  \\ \nonumber
=& \zeta_{G^{\phi}}(C_{n_p})\zeta_{G^{\phi}}(C_{n_q}). \nonumber
\end{align}
Let $\mathcal{C}=\{C_{n_1},\ldots,C_{n_k}\}$. Note that $\mathcal{C}$ is a basis of the cycle space of $G$. Therefore, if $C$ is a fundamental cycle of $G$, then $C$ can be expressed as a symmetric difference of elements of $\mathcal{C}$. Accordingly, by Equation~(\ref{symdif}), $\zeta_{G^{\phi}}(C)$ is a product of gains of some cycles in $\mathcal{C}$. 

Now let $\gamma$ be another partial orientation on $G$. If $G^{\phi}$ and $G^{\gamma}$ have the same gain on each cycle in $\mathcal{C}$, then we find that they have the same gain on each fundamental cycle. Then from Theorem~\ref{sw01}, we have $G^{\phi}\thicksim G^{\gamma}$. Conversely, if $G^{\phi}\thicksim G^{\gamma}$ then clearly $G^{\phi}$ and $G^{\gamma}$ have the same gain on each cycle in $\mathcal{C}$. Hence two mixed plane graphs are switching equivalent if and only if they have the same gain on each face cycles.

For all $y\in \{ \pm 1,\pm \mathbf{i} \}^k$, define $\Gamma_G(y)$ to be the set of all $X:=[x_{pq}]_{k \times k}$ satisfying the following conditions:
\begin{enumerate}[label=(\arabic*)]
    \item $x_{pq}= \left\{ \begin{array}{lll}
0 & \textnormal{if $n_{pq}=0$} \\ 1,\mathbf{i} \textnormal{ or } -\mathbf{i} &  \textnormal{if $n_{pq}=1$}\\ 1,-1,\mathbf{i} \textnormal{ or } -\mathbf{i} &  \textnormal{if $n_{pq}>1$;}
\end{array}\right. $
    \item  $\prod\limits_{\substack{q=1\\ n_{pq}\neq 0}}^k x_{pq}=y_p \textnormal{ for } p\in \{1,\ldots,k\}$;
    \item $x_{pq}=\overline{x}_{qp}$ for $p\neq q$.
\end{enumerate}

For a partial orientation $\phi$  on $G$, let $y=(y_1,\ldots, y_k)$, where $y_p=\zeta_{G^{\phi}}(C_{n_p})$ for each $p\in \{1,\ldots,k\}$. Define $T \colon [G^{\phi}] \to \Gamma_G(y)$ such that  $T(G^{\gamma}):=[y_{pq}]$, where 
$$y_{pq}= \left\{ \begin{array}{ll}
\zeta_{G^{\gamma}}(E_{pq})  & \textnormal{ if $n_{pq}\geq 1$} \\ 
0 &  \textnormal{ otherwise.}
\end{array}\right. $$  
We have the following observations. 
\begin{itemize}
\item It is easy to see that each entry of $[y_{pq}]$ satisfies the condition (1) for $\Gamma_G(y)$.
\item Since $G^{\gamma} \in [G^{\phi}]$ and $E(C_{n_p})=E_{p1} \cup E_{p2} \cup \cdots \cup E_{pk}$ is a disjoint union,  we have
$$\prod\limits_{\substack{q=1\\ n_{pq}\neq 0}}^k y_{pq}=\prod\limits_{\substack{q=1\\ n_{pq}\neq 0}}^k \zeta_{G^{\gamma}}(E_{pq})=\zeta_{G^{\gamma}}(C_{n_p}) = \zeta_{G^{\phi}}(C_{n_p}) =y_p$$ 
for each $ p\in \{ 1,\ldots , k\}$. Thus $[y_{pq}]$ satisfies the condition (2) for $\Gamma_G(y)$.
\item Since direction of both the face cycles $C_{n_p}$ and $C_{n_q}$ are clockwise, $y_{pq}=\zeta_{G^{\gamma}}(E_{pq})=\overline{\zeta_{G^{\gamma}}(E_{qp})}=\overline{y}_{qp}$ for $p\neq q$, and so condition (3) holds for $\Gamma_G(y)$. 
\item As $[y_{pq}]$ satisfies all the three conditions for $\Gamma_G(y)$, we have $T(G^{\gamma}) \in \Gamma_G(y)$.

\item For each $Y\in \Gamma_G(y)$, define a mixed graph $G^{\gamma}$ by assigning gains on the edges of $E_{pq}$, according to the clockwise direction of the cycle $C_{n_p}$,  such that $\zeta_{G^{\gamma}}(E_{pq})=y_{pq}$ for $n_{pq}\geq 1$. We see that $T(G^{\gamma})=Y$. Thus $T$ is surjective. 

\item Note that
$$[G^{\phi}]= \bigcup_{Y\in \Gamma_G(y)} T^{-1}(Y), \text{ where } T^{-1}(Y)=\{G^{\gamma}\in [G^{\phi}]\colon T(G^{\gamma})=Y\}.$$
Therefore  $|[G^{\phi}]|= \sum_{Y\in \Gamma_G(y)}| T^{-1}(Y)|$.
\end{itemize}
Consider the following example to explain the preceding discussion.

\noindent\textbf{Example}: Let $G$ be the union of two cycles with a common path, as shown in Figure~\ref{cycle}. Let ${\phi}$ be a partial orientation on $G$.
\begin{figure}[ht]
\centering
\begin{tikzpicture}[scale=0.35]
\node[vertex] (u1) at (12,10) {};
\node [below] at (12.1,11.5) {$u_{q}$};
\node[vertex] (u2) at (15,8) {};
\node [below] at (15.9,8.3) {$u_{1}$};
\node[vertex] (u3) at (15,2) {};
\node [below] at (15.9,2.5) {$u_{p}$};
\node[vertex] (u4) at (12,0) {};
\node [below] at (12.2,-0.2) {$u_{p+1}$};
\node[vertex] (u5) at (9,2) {};
\node [below] at (7.5,2.5) {$u_{p+2}$};
\node[vertex] (u6) at (9,8) {};
\node [below] at (7.5,8.8) {$u_{q-1}$};

\node[vertex] (u7) at (9,4) {};
\node [below] at (7.5,4.5) {$u_{p+3}$};
\node[vertex] (u8) at (9,6) {};
\node [below] at (7.5,6.8) {$u_{q-2}$};
\node[vertex] (u9) at (15,4) {};
\node [below] at (16.4,4.3) {$u_{p-1}$};
\node[vertex] (u10) at (15,6) {};
\node [below] at (15.9,6.3) {$u_{2}$};

\node[vertex] (v1) at (18,10) {};
\node [below] at (18.1,11.5) {$v_{1}$};
\node[vertex] (v2) at (21,8) {};
\node [below] at (21.9,8.7) {$v_{2}$};
\node[vertex] (v3) at (21,2) {};
\node [below] at (22.5,2.5) {$v_{r-1}$};
\node[vertex] (v4) at (18,0) {};
\node [below] at (18.2,-0.2) {$v_{r}$};

\node[vertex] (v5) at (21,4) {};
\node [below] at (22.4,4.7) {$v_{r-2}$};

\node[vertex] (v6) at (21,6) {};
\node [below] at (21.9,6.7) {$v_{3}$};

\node [below] at (12.1,5) {$C_1$};
\node [below] at (18.1,5) {$C_{2}$};

\foreach \from/\to in {u1/u2,u3/u4,u4/u5,u1/u6,v1/u2,v1/v2,v3/v4,v4/u3,u5/u7,u6/u8,u3/u9, u2/u10,v3/v5,v2/v6} \draw (\from) -- (\to);
\draw [dashed] (15,6) -- (15,4);
\draw [dashed] (9,4) -- (9,6);
\draw [dashed] (21,6) -- (21,4);

\end{tikzpicture}
\caption{Union of two cycles with a common path}
\label{cycle}
\end{figure}
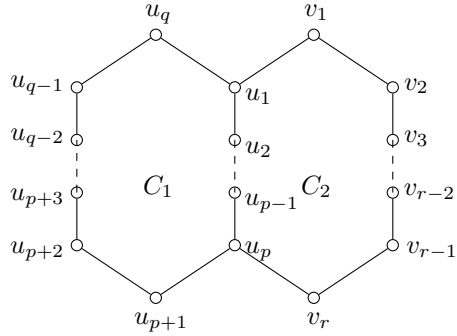 
Let $C_1: = u_1\ldots u_{p-1}u_{p}\ldots u_qu_1$ and $C_2:=v_1v_2\ldots v_ru_pu_{p-1}\ldots u_1v_1$ be the face cycles in $G$, each in clockwise direction. Assume that $n_{pq}\geq 2$ for all $p,q$. It is clear that $E(G)=E_{11}\cup E_{12}\cup E_{22}$.

\begin{enumerate}[label=(\arabic*)]
\item  If $\zeta_{G^{\phi}}(C_1)=1$ and $\zeta_{G^{\phi}}(C_2)=1$, then $y=(1,1)$. We get $\Gamma_G(y)=\{X_1, X_2, X_3,X_4 \} $, where 
$$X_1= \begin{bmatrix} 1&  1\\ 1  & 1 \end{bmatrix},~~X_2= \begin{bmatrix} -1&  -1\\ -1  & -1 \end{bmatrix},~~X_3= \begin{bmatrix} \mathbf{i}& -\mathbf{i}\\ \mathbf{i}  & -\mathbf{i} \end{bmatrix},~~ X_4= \begin{bmatrix} -\mathbf{i}&  \mathbf{i}\\ -\mathbf{i}  & \mathbf{i} \end{bmatrix}.$$ 

For all $G^{\gamma} \in T^{-1}(X_1)$, we have $\zeta_{G^{\gamma}}(E_{11})=1, \zeta_{G^{\gamma}}(E_{12})=1$ and $\zeta_{G^{\gamma}}(E_{22})=1$. Therefore $$|T^{-1}(X_1)|=\alpha_1(n_{11})\alpha_1(n_{12})\alpha_1(n_{22}).$$
Similarly, 
\begin{equation*}
\begin{split}
&|T^{-1}(X_2)|=\alpha_{-1}(n_{11})\alpha_{-1}(n_{12})\alpha_{-1}(n_{22}),\\
&|T^{-1}(X_3)|=\alpha_{\mathbf{i}}(n_{11})\alpha_{-\mathbf{i}}(n_{12})\alpha_{-\mathbf{i}}(n_{22}), \textnormal{ and }\\
&|T^{-1}(X_4)|=\alpha_{-\mathbf{i}}(n_{11})\alpha_{\mathbf{i}}(n_{12})\alpha_{\mathbf{i}}(n_{22}).
\end{split}
\end{equation*} 
Thus 
\begin{equation*}
\begin{split}
|[G^{\phi}]|=& \alpha_1(n_{11})\alpha_1(n_{12})\alpha_1(n_{22})+\alpha_{-1}(n_{11})\alpha_{-1}(n_{12})\alpha_{-1}(n_{22})\\ 
&+\alpha_{\mathbf{i}}(n_{11})\alpha_{-\mathbf{i}}(n_{12})\alpha_{-\mathbf{i}}(n_{22})+\alpha_{-\mathbf{i}}(n_{11})\alpha_{\mathbf{i}}(n_{12})\alpha_{\mathbf{i}}(n_{22}).
\end{split}
\end{equation*}

\item  If $\zeta_{G^{\phi}}(C_1)=1$ and $\zeta_{G^{\phi}}(C_2)=-1$, then $y=(1,-1)$. We get 
$$\Gamma_G(y)=\left\{ \begin{bmatrix} 1&  1\\ 1  & -1 \end{bmatrix}, \begin{bmatrix} -1&  -1\\ -1  & 1 \end{bmatrix},  \begin{bmatrix} \mathbf{i}& -\mathbf{i}\\ \mathbf{i}  & \mathbf{i} \end{bmatrix},  \begin{bmatrix} -\mathbf{i}&  \mathbf{i}\\ -\mathbf{i}  & -\mathbf{i} \end{bmatrix} \right\}.$$ 
Thus 
\begin{equation*}
\begin{split}
|[G^{\phi}]|=& \alpha_1(n_{11})\alpha_1(n_{12})\alpha_{-1}(n_{22})+\alpha_{-1}(n_{11})\alpha_{-1}(n_{12})\alpha_{1}(n_{22})\\
& + \alpha_{\mathbf{i}}(n_{11})\alpha_{-\mathbf{i}}(n_{12})\alpha_{\mathbf{i}}(n_{22})+\alpha_{-\mathbf{i}}(n_{11})\alpha_{\mathbf{i}}(n_{12})\alpha_{-\mathbf{i}}(n_{22}).
\end{split}
\end{equation*} 
\end{enumerate} 
Using similar argument, one can calculate the size of the equivalence classes for the other possible values of $\zeta_{G^{\phi}}(C_1)$ and $\zeta_{G^{\phi}}(C_2)$. In the next result, we introduce a formula to calculate the size of switching equivalence classes of a mixed plane graph.

\begin{theorem}\label{sw31}
Let $G^{\phi}$ be a $2$-connected  mixed plane graph and $C_{n_1},\ldots,C_{n_k}$ be the distinct face cycles in $G$, each considered in clockwise direction. Let  $n_r$ be the order of $C_{n_r}$ for each $r\in \{1,\ldots,k\}$.   Then $$|[G^{\phi}]|=\sum_{X\in \Gamma_G(y)} \prod_{\substack{p,q=1\\ p\leq q,n_{pq}\neq 0}}^k \alpha_{x_{pq}}(n_{pq}),$$ 
where $y=(y_1,\ldots, y_k)$ and $y_p=\zeta_{G^{\phi}}(C_{n_p})$ for each $p\in \{1,\ldots,k\}$.
\end{theorem}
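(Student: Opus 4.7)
The plan is to build directly on the setup preceding the theorem, which has already established the disjoint-union decomposition
$$[G^{\phi}]= \bigcup_{X\in \Gamma_G(y)} T^{-1}(X),$$
together with the surjectivity of $T$. That discussion has also verified that any $G^{\gamma}$ obtained by prescribing edge gains so that $\zeta_{G^{\gamma}}(E_{pq})=x_{pq}$ for each $(p,q)$ with $n_{pq}\neq 0$ does lie in $[G^{\phi}]$: condition (2) on $\Gamma_G(y)$ forces $\zeta_{G^{\gamma}}(C_{n_p})=y_p=\zeta_{G^{\phi}}(C_{n_p})$ for every face cycle, and by the face-cycle characterization of switching equivalence argued immediately before the theorem statement, this implies $G^{\gamma}\thicksim G^{\phi}$. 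So the theorem reduces to computing $|T^{-1}(X)|$ for each $X\in\Gamma_G(y)$ and summing over $\Gamma_G(y)$.

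Fix $X=[x_{pq}]\in \Gamma_G(y)$. I would first note that the edge sets $\{E_{pq}: p\leq q,\ n_{pq}\neq 0\}$ partition $E(G)$; thus specifying a $G^{\gamma}\in T^{-1}(X)$ amounts to independently choosing, for each such pair $(p,q)$, gains in $\{1,\mathbf{i},-\mathbf{i}\}$ on the $n_{pq}$ edges of $E_{pq}$ whose product taken in the clockwise direction of $C_{n_p}$ equals $x_{pq}$. The constraints indexed by the reversed pairs $(q,p)$ with $p<q$ are redundant by condition (3): $x_{qp}=\overline{x_{pq}}$ matches $\zeta_{G^{\gamma}}(E_{qp})=\overline{\zeta_{G^{\gamma}}(E_{pq})}$ automatically, so no such pair needs to be counted twice.

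By the very definition of $\alpha_x(n)$, the number of ways to assign gains in $\{1,\pm\mathbf{i}\}$ to $n_{pq}$ edges so that their product equals $x_{pq}$ is exactly $\alpha_{x_{pq}}(n_{pq})$; condition (1) on $\Gamma_G(y)$ precisely rules out the values of $x_{pq}$ for which $\alpha_{x_{pq}}(n_{pq})=0$ (most notably $x_{pq}=-1$ when $n_{pq}=1$, since $-1$ is never a single-factor product from $\{1,\pm\mathbf{i}\}$). Independence of the choices across distinct pairs $(p,q)$ with $p\leq q$ then yields
$$|T^{-1}(X)|=\prod_{\substack{p,q=1\\ p\leq q,\, n_{pq}\neq 0}}^{k}\alpha_{x_{pq}}(n_{pq}),$$
and summing over $X\in\Gamma_G(y)$ gives the claimed formula.

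Since essentially all the structural observations have already been carried out in the discussion before the theorem, no deep step remains; the proof is bookkeeping. The one point that requires care is the matching of the three defining conditions of $\Gamma_G(y)$ with the three local constraints on admissible gain assignments (realisability, prescribed face-cycle product, and conjugate symmetry), and the observation that the $p\leq q$ restriction in the product is exactly what prevents double-counting the common boundary of two adjacent face cycles.
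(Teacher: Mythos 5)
Your proposal is correct and follows essentially the same route as the paper: both decompose $[G^{\phi}]$ as the disjoint union of the fibers $T^{-1}(X)$ over $X\in\Gamma_G(y)$, count each fiber as $\prod_{p\leq q,\,n_{pq}\neq 0}\alpha_{x_{pq}}(n_{pq})$ using the partition of $E(G)$ by the sets $E_{pq}$, and sum. Your extra remarks on the redundancy of the $(q,p)$ constraints and the role of condition (1) are just a more explicit spelling-out of steps the paper leaves implicit.
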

\begin{proof}  Let $X=[x_{pq}]\in \Gamma_G(y)$. Then $T^{-1}(X)$ is the set of all $G^{\gamma}$  such that $\zeta_{G^{\gamma}}(E_{pq})=x_{pq}$ for all $p,q$. Note that $\bigcup_{p,q=1, p\leq q}^{k}E_{pq}$ is a disjoint union of all the edges of $G$. 
Thus 
$$|T^{-1}(X)|= \prod\limits_{\substack{p,q=1\\ p\leq q,n_{pq}\neq 0}}^k \alpha_{x_{pq}}(n_{pq}).$$ 
Now using  $|[G^{\phi}]|= \sum_{X\in \Gamma_G(y)}| T^{-1}(X)|$, we get the desired result.
\end{proof}

\begin{theorem}
Let $G$ be a $2$-connected plane graph. If the number of face cycles in $G$ is $k$, then the number of switching equivalence classes in $\mathcal{M}(G)$ is equal to the size of the set $\{ y\colon y\in \{ \pm 1,\pm \mathbf{i} \}^k , \Gamma_G(y)\neq \emptyset \} $.
\end{theorem}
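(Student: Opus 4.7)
The plan is to observe that this theorem is essentially a bookkeeping consequence of the characterization already established in the paragraph preceding Theorem~\ref{sw31}, namely that two mixed plane graphs on $G$ are switching equivalent if and only if they agree on the gain of every face cycle. Once this is available, a switching equivalence class in $\mathcal{M}(G)$ is completely determined by the tuple $y=(y_1,\dots,y_k)$ with $y_p=\zeta_{G^{\phi}}(C_{n_p})\in\{\pm 1,\pm \mathbf{i}\}$, so the number of classes equals the number of tuples $y\in\{\pm 1,\pm \mathbf{i}\}^k$ that actually arise from some partial orientation of $G$.

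Concretely, I will set up the map $\Psi\colon \Omega_{\mathcal{M}}(G)\to \{\pm 1,\pm \mathbf{i}\}^k$ given by $\Psi([G^{\phi}])=(\zeta_{G^{\phi}}(C_{n_1}),\dots,\zeta_{G^{\phi}}(C_{n_k}))$. The face-cycle characterization shows $\Psi$ is well-defined and injective, so it remains only to identify the image of $\Psi$ with $\{y\colon \Gamma_G(y)\neq\emptyset\}$. For one direction, if $y$ is in the image, pick any $G^{\phi}$ with $\Psi([G^{\phi}])=y$; then the observations listed just before Theorem~\ref{sw31} show $T(G^{\phi})\in\Gamma_G(y)$, so $\Gamma_G(y)\neq\emptyset$. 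For the other direction, given $X=[x_{pq}]\in\Gamma_G(y)$, I construct $G^{\phi}$ by assigning gains on the edges of each block $E_{pq}$ consistently with the clockwise direction of $C_{n_p}$ so that $\zeta_{G^{\phi}}(E_{pq})=x_{pq}$; this is possible exactly because condition (1) restricts the entries of $X$ to gains that are realizable as products of $n_{pq}$ elements of $\{1,\mathbf{i},-\mathbf{i}\}$, and condition (3) guarantees that the two readings of $E_{pq}$ from the $p$-th and $q$-th face agree. Then condition (2) together with the disjoint decomposition $E(C_{n_p})=\bigcup_q E_{pq}$ forces $\zeta_{G^{\phi}}(C_{n_p})=y_p$, so $y$ is in the image of $\Psi$.

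Combining, $\Psi$ is a bijection from $\Omega_{\mathcal{M}}(G)$ onto $\{y\in\{\pm 1,\pm \mathbf{i}\}^k\colon \Gamma_G(y)\neq\emptyset\}$, which is precisely the claim. The only step requiring genuine care is the realizability check in the sufficiency direction, specifically verifying that each entry $x_{pq}$ in condition (1) is attainable as a product of $n_{pq}$ gains from $\{1,\mathbf{i},-\mathbf{i}\}$; this is routine since a single edge ($n_{pq}=1$) admits gains $1,\pm \mathbf{i}$, and for $n_{pq}\geq 2$ every element of $\{\pm 1,\pm\mathbf{i}\}$ is such a product. Because the paper's observations preceding Theorem~\ref{sw31} have already verified surjectivity of $T$ and the required face-cycle characterization of switching equivalence, no further obstacle arises.
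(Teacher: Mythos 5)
Your proposal is correct and follows essentially the same route as the paper: the same map $[G^{\phi}]\mapsto(\zeta_{G^{\phi}}(C_{n_1}),\dots,\zeta_{G^{\phi}}(C_{n_k}))$, injectivity via the face-cycle characterization of switching equivalence (Theorem~\ref{sw01} together with the discussion preceding Theorem~\ref{sw31}), and surjectivity by realizing a matrix $X\in\Gamma_G(y)$ through gains assigned on the blocks $E_{pq}$. Your added remarks on realizability of each $x_{pq}$ (condition (1)) and consistency of the two readings of $E_{pq}$ (condition (3)) merely make explicit what the paper leaves implicit.
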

\begin{proof} Let $W=\{ y\colon y\in \{ \pm 1,\pm \mathbf{i} \}^k , \Gamma_G(y)\neq \emptyset \} $. Let $C_{n_1},\ldots,C_{n_k}$ be the distinct face cycles in $G$, each considered in clockwise direction. Let  $n_r$ be the order of the cycle $C_{n_r}$ for each $r\in \{1,\ldots,k\}$.  We establish a bijective mapping $f$ between $\Omega_{\mathcal{M}}(G)$ and $W$. Define the map $f :\Omega_{\mathcal{M}}(G) \to  W $ such that $f([G^{\phi}])=(\zeta_{G^{\phi}}(C_{n_1}),\ldots,\zeta_{G^{\phi}}(C_{n_k}) )$. Theorem \ref{sw01} gives that $f([G^{\phi}])=f([G^{\gamma}])$ if and only if $[G^{\phi}] = [G^{\gamma}]$. Thus $f$ is well defined and injective. Let $y\in W$, so that $y\in \{ \pm 1,\pm \mathbf{i} \}^k$ and $\Gamma_G(y)\neq \emptyset$. Thus there exists an $X \in \Gamma_G(y)$. Let $X=[x_{pq}]$. Define a mixed graph $G^{\gamma}$ by assigning gains on the edges of $E_{pq}$, according to the clockwise direction of the cycle $C_{n_p}$,  such that $\zeta_{G^{\gamma}}(E_{pq})=x_{pq}$ for $n_{pq}\geq 1$. 
Observe that 
$$\zeta_{G^{\gamma}}(C_{n_p})= \prod\limits_{\substack{q=1\\ E_{pq}\neq \emptyset}}^k \zeta_{G^{\gamma}}(E_{pq})=\prod\limits_{\substack{q=1\\ n_{pq}\neq 0}}^k x_{pq}=y_p \textnormal{ for } p\in \{1,\ldots,k\}.$$ 
Thus $f([G^{\gamma}])=y$, and so $f$ is an onto map. Hence $f$ is a bijective map and the desired result follows.
\end{proof}


\section{Action of automorphism groups on switching equivalence classes}
Consider the gain graphs $G_A^\zeta$ and $G_B^\zeta$ of the matrices $A:=[a_{ij}]$ and $B:=[b_{ij}]$ in $\mathcal{H}_{n}(\Gamma)$, respectively. The gain graphs $G_A^\zeta$ and $G_B^\zeta$ are said to be \emph{isomorphic} if there is a bijective map $f:\{1,\ldots,n\}\rightarrow \{1,\ldots,n\}$ such that $b_{f(i)f(j)}=a_{ij}$ for all $i,j$. 
An \emph{automorphism} of $G_A^\zeta$ is an isomorphism of $G_A^\zeta$ onto itself. The group of all automorphisms of $G_A^\zeta$ is denoted by $\Aut(G_A^\zeta)$. If the automorphisms of $G_A^\zeta$ are identified with their corresponding permutation matrices, then
$$\Aut(G_A^\zeta)= \{P: P\textnormal{ is $n\times n$ permutation matrix and } PA= A P \}.$$ 

The gain graphs $G_A^\zeta$ and $G_B^\zeta$ are said to be \textit{switching isomorphic} if $G_A^\zeta$ is isomorphic to a gain graph $G_C^\zeta$ such that $C$ is switching equivalent to $B$, that is, there is a permutation matrix $P$ and a diagonal matrix  $D(\theta):= \text{diag}[ \theta(1),\ldots,\theta(n)]$ such that $A=(D(\theta)P)^{-1}B(D(\theta)P)$, where $\theta(i)\in \Gamma\setminus \{ 0\}$ for each $i\in \{1,\ldots,n\}$.

Recall that mixed graphs are special types of gain graphs. Thus, isomorphisms and automorphisms of mixed graphs are defined as the same in the gain graphs of their Hermitian adjacency matrices. The group of all automorphisms of a mixed graph $G^{\phi}$ is denoted by $\Aut(G^{\phi})$. The group of all automorphisms of a simple undirected graph $G$ is denoted by $\Aut(G)$. Recall that a partial orientation $\phi$ of an undirected graph $G$ is to specify a  direction according to $\phi$ to each edge in a subset $S$ of $E(G)$. That is, $S$ is the set of all directed edges of $G^{\phi}$. Let $G(S)^{\phi}$ be the subgraph of a mixed graph $G^{\phi}$ with vertex set $V(G)$ and edge set $S$. Note that all the edges of $G(S)^{\phi}$ are directed. Let $G(E\setminus S)$ be the subgraph of $G^{\phi}$ with vertex set $V(G)$ and edge set $E(G)\setminus S$. Note that none of the edges of $G(E\setminus S)$ are directed. It is easy to verify that $H(G^{\phi})=H(G(S)^{\phi})+ H(G(E\setminus S))$. Now we present the following lemma.

\begin{lema} If $G^{\phi}$ is a mixed graph, then $$ \Aut(G^{\phi}) =  \Aut(G) \cap  \Aut (G(S)^{\phi}) =  \Aut(G(S)^{\phi}) \cap  \Aut (G(E \setminus S)).$$   
\end{lema}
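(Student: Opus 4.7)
The plan is to exploit the decomposition $H(G^{\phi})=H(G(S)^{\phi})+H(G(E\setminus S))$ together with the fact that off-diagonal entries of $H(G(E\setminus S))$ are real (in $\{0,1\}$) while those of $H(G(S)^{\phi})$ are purely imaginary (in $\{0,\pm\mathbf{i}\}$). Hence this is precisely the real–imaginary part decomposition of $H(G^{\phi})$. Since any permutation matrix $P$ is real, the equation $PH(G^{\phi})=H(G^{\phi})P$ is equivalent, by taking real and imaginary parts, to the simultaneous pair $PH(G(E\setminus S))=H(G(E\setminus S))P$ and $PH(G(S)^{\phi})=H(G(S)^{\phi})P$. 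This directly yields the second stated equality
\[\Aut(G^{\phi}) = \Aut(G(S)^{\phi}) \cap \Aut(G(E\setminus S)).\]

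Next I would establish the first equality by comparing $\Aut(G(E\setminus S))$ with $\Aut(G)$ in the presence of an element of $\Aut(G(S)^{\phi})$. The key observation is that any $P\in\Aut(G(S)^{\phi})$ automatically lies in $\Aut(G(S))$, where $G(S)$ denotes the underlying simple graph of $G(S)^{\phi}$; this is because $P$ permutes the nonzero positions of $H(G(S)^{\phi})$, which are exactly the nonzero positions of the ordinary adjacency matrix $H(G(S))$ (one can read this off by replacing each entry by its modulus). Using the decomposition $H(G)=H(G(S))+H(G(E\setminus S))$ of real $0/1$ matrices, once $P$ commutes with $H(G(S))$ it commutes with $H(G(E\setminus S))$ if and only if it commutes with $H(G)$. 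This gives
\[\Aut(G(S)^{\phi})\cap \Aut(G(E\setminus S))=\Aut(G(S)^{\phi})\cap \Aut(G),\]
and chaining the two equalities completes the proof.

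There is no serious obstacle; the only thing to be careful about is the conversion between the Hermitian matrix identity and the two real identities, which boils down to the trivial fact that a real matrix commutes with $A=X+\mathbf{i}Y$ (with $X,Y$ real) iff it commutes with both $X$ and $Y$. Everything else is bookkeeping with the three summand-adjacency matrices.
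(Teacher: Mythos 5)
Your proposal is correct, but it proceeds along a more linear-algebraic track than the paper's own argument, which is combinatorial and treats the two equalities independently: the paper declares the first equality $\Aut(G^{\phi})=\Aut(G)\cap\Aut(G(S)^{\phi})$ to be straightforward, and proves the second by double inclusion, observing that $\Aut(G^{\phi})$ is contained in both $\Aut(G(S)^{\phi})$ and $\Aut(G(E\setminus S))$, and conversely that a common automorphism of $G(S)^{\phi}$ and $G(E\setminus S)$ preserves the orientation of every edge of $G^{\phi}$. You instead work entirely with the commutation relations defining these groups: since a permutation matrix is real, it commutes with $H(G^{\phi})=H(G(E\setminus S))+H(G(S)^{\phi})$ (real part plus purely imaginary part) exactly when it commutes with each summand, which gives the second equality in one stroke and, in particular, furnishes a proof of the two inclusions the paper merely asserts; you then deduce the first equality from the second via the entrywise-modulus observation $\Aut(G(S)^{\phi})\subseteq\Aut(G(S))$, where $G(S)$ is the underlying undirected graph on the edge set $S$, together with the $0/1$ decomposition of the adjacency matrix of $G$ as the sum of those of $G(S)$ and $G(E\setminus S)$, so that a matrix commuting with two of the three terms commutes with the third. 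Both routes are sound; yours buys a uniform mechanism (separate real and imaginary parts, then subtract known commutants) and makes explicit the ``straightforward'' first equality, at the cost of introducing the auxiliary graph $G(S)$ and a little extra bookkeeping, while the paper's version is shorter and stays at the level of edges and orientations.
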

\begin{proof}
It is straightforward to check that $\Aut(G^{\phi}) =  \Aut(G) \cap  \Aut (G(S)^{\phi})$. Now we show that $\Aut(G^{\phi}) = \Aut(G(S)^{\phi}) \cap  \Aut (G(E \setminus S)).$ Since $\Aut(G^{\phi}) \subseteq \Aut(G(S)^{\phi})$ and $\Aut(G^{\phi}) \subseteq \Aut (G(E \setminus S))$, we have $\Aut(G^{\phi}) \subseteq \Aut(G(S)^{\phi}) \cap  \Aut (G(E \setminus S))$. Conversely, if $f \in \Aut(G(S)^{\phi})~\cap  \Aut (G(E \setminus S))$ then $f$ is an automorphism of each of $G(S)^{\phi}$ and $G(E \setminus S)$. Thus, $f$ preserves the orientation of the edges of $G^{\phi}$. This gives $f \in \Aut(G^{\phi}) $. Thus $\Aut(G^{\phi}) =  \Aut(G(S)^{\phi}) \cap  \Aut (G(E \setminus S))$.
\end{proof}

For an $A\in \mathcal{H}_{n}(\Gamma)$, let $\mathcal{M}(A)=\{B\in \mathcal{H}_{n}(\Gamma)\colon G_A=G_B\}$. For $B\in \mathcal{M}(A)$, let
\[ [B]=\{X\in \mathcal{M}(A)\colon X \thicksim B\} ~\text{ and } \Omega_{\mathcal{M}}(A)=\{[B]\colon B\in \mathcal{M}(A)\}.\] 
We now define an action of the automorphism group $\Aut(G_A)$ on the set $\Omega_{\mathcal{M}}(A)$, the set of all equivalence classes in $\mathcal{M}(A)$. Let $A\in \mathcal{H}_{n}(\Gamma)$ with $G:=G_A$. For an $f\in \Aut(G)$ and $B\in \mathcal{M}(A)$, define the matrix $f(B)=[b_{uv}^f]$ by $b_{uv}^f=b_{f(u)f(v)}$ for all $u,v$.
 
To verify that this action is well defined, it is enough to show that if $B$ and $C$ are switching equivalent then $f(B)$ and $f(C)$ are also switching equivalent. Assume that $B:=[b_{uv}]$ and $C:=[c_{uv}]$ are switching equivalent. Then there exists a diagonal matrix $D(\theta):= \text{diag}[ \theta(1),\ldots,\theta(n)]$ such that  $B=D(\theta)^{-1} C D(\theta)$, where $\theta(i)\in \Gamma\setminus \{ 0\}$ for each $i\in \{1,\ldots,n\}$. That is, $b_{uv}=\overline{\theta(u)} c_{uv} \theta(v)$ for all $u,v$. 

Consider the diagonal matrix $D(\theta f):= \text{diag}[ \theta(f(1)),\ldots,\theta(f(n))]$. We have
\begin{align*}
\overline{\theta(f(u))}c_{uv}^f \theta (f(v))= \overline{\theta(f(u))}c_{f(u)f(v)} \theta (f(v)) = b_{f(u)f(v)}=b_{uv}^f.
\end{align*}
This means that $f(B)=D(\theta f)^{-1}f(C)D(\theta f)$, that is, $f(B)$ and $f(C)$ are switching equivalent.

\begin{theorem}If $A,B\in \mathcal{H}_{n}(\Gamma)$, then the gain graphs $G_A^\zeta$ and $G_B^\zeta$ are switching isomorphic if and only if $[A]$ and $[B]$ belong to the same orbit of $\Omega_{\mathcal{M}}(A)$ under the action of $\Aut(G_A)$.
\end{theorem}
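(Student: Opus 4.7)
The plan is to realize both sides of the equivalence as matrix identities of the form $A = Q^{-1} B Q$ with $Q$ a product of a permutation matrix and a diagonal matrix, and to convert one form into the other by a short matrix calculation. I would first fix the convention that for $f \in \Aut(G_A)$ the permutation matrix $P_f$ satisfies $P_f e_j = e_{f(j)}$. Two identities will drive the whole argument: the entrywise identity $(P_f^{-1} M P_f)_{ij} = M_{f(i) f(j)} = f(M)_{ij}$, which says conjugation by $P_f$ realizes the $\Aut(G_A)$-action defined in the excerpt; and the commutation $P_f D(\theta) = D(\theta \circ f^{-1}) P_f$, which pushes a diagonal across a permutation at the cost of reindexing.

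For the forward direction, assume $G_A^\zeta$ and $G_B^\zeta$ are switching isomorphic, so
\[
A = (D(\theta) P_f)^{-1} B (D(\theta) P_f) = P_f^{-1} \bigl(D(\theta)^{-1} B D(\theta)\bigr) P_f = f(C),
\]
where $C := D(\theta)^{-1} B D(\theta)$. Then $C \thicksim B$ by definition, hence $[C] = [B]$. Because $B \in \mathcal{M}(A)$ forces $G_B = G_A$, and because $A = f(C)$ entrywise forces $G_A = f^{-1}(G_C) = f^{-1}(G_A)$, the bijection $f$ lies in $\Aut(G_A)$. By well-definedness of the action on switching equivalence classes, $f([B]) = f([C]) = [f(C)] = [A]$, so $[A]$ and $[B]$ are in the same $\Aut(G_A)$-orbit.

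For the converse, suppose $f([B]) = [A]$ with $f \in \Aut(G_A)$. Then $f(B) \thicksim A$, so there is a diagonal matrix $D(\theta)$ with
\[
A = D(\theta)^{-1} f(B) D(\theta) = D(\theta)^{-1} P_f^{-1} B P_f D(\theta) = (P_f D(\theta))^{-1} B (P_f D(\theta)).
\]
Applying the commutation identity and writing $\theta' := \theta \circ f^{-1}$, this rearranges as $A = (D(\theta') P_f)^{-1} B (D(\theta') P_f)$, which is the definition of switching isomorphism of $G_A^\zeta$ and $G_B^\zeta$.

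The routine-but-error-prone part is the bookkeeping around the permutation-matrix convention and verifying the commutation $P_f D(\theta) = D(\theta \circ f^{-1}) P_f$. The main conceptual step, which one must not skip, is verifying on the forward direction that $f$ genuinely lies in $\Aut(G_A)$ rather than being a bare bijection between a priori distinct graphs; this is where I would invoke $G_{f(C)} = f^{-1}(G_C)$ together with the ambient equality $G_A = G_B$ implicit in the statement that $[A], [B] \in \Omega_{\mathcal{M}}(A)$.
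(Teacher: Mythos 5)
Your proposal is correct and follows essentially the same route as the paper: both directions are an unwinding of the definitions of switching isomorphism and of the $\Aut(G_A)$-action, with the key observations being $C=f(C')$-type identifications, the well-definedness of the action on classes, and the check that the permutation involved is genuinely an automorphism of $G_A$ because $G_B=G_A$. The only difference is presentational: you carry out the bookkeeping at the matrix level via $P_f^{-1}MP_f=f(M)$ and the commutation $P_fD(\theta)=D(\theta\circ f^{-1})P_f$, whereas the paper argues entrywise with the gain-graph isomorphism formulation and so never needs to commute $P$ past $D(\theta)$.
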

\begin{proof}
Let $A:=[a_{uv}]$ and $B:=[b_{uv}]$. Let the gain graphs $G_A^\zeta$ and $G_B^\zeta$ be switching isomorphic. Then there exists a gain graph $G_C^\zeta$ such that $G_A^\zeta$ is isomorphic to  $G_C^\zeta$, and $C$ is switching equivalent to $B$. Let $C:=[c_{uv}]$. As $G_A^\zeta$ is isomorphic to  $G_C^\zeta$, there exists $f\in \Aut(G_A)$ such that $c_{uv}=a_{f(u)f(v)}$ for all $u,v$. This means that $C=f(A)$. As $C$ is switching equivalent to $B$, we find that $f(A)$ is switching equivalent to $B$, that is, $[f(A)]=[B]$. Hence $[A]$ and $[B]$ belong to the same orbit of $\Omega_{\mathcal{M}}(A)$ under the action of $\Aut(G_A)$.

Conversely, assume that $[A]$ and $[B]$ belong to the same orbit of $\Omega_{\mathcal{M}}(A)$ under the action of $\Aut(G_A)$. Then there exists $f\in \Aut(G_A)$ such that $[f(A)]=[B]$. We see that $G_A^\zeta$ is isomorphic to $G_{f(A)}^\zeta$ and $f(A)$ is switching equivalent to $B$. Hence $G_A^\zeta$ and $G_B^\zeta$ are switching isomorphic.
\end{proof}

\noindent\textbf{Acknowledgments}

We thank Dr. Debajit Kalita for going through the manuscript and giving some useful comments. We also sincerely thank the anonymous reviewers for carefully reading our manuscript and providing
insightful comments and suggestions that helped us in improving the presentation of the article.


\end{document}